\newtheorem{definition}{Definition}
\newtheorem{theorem}{Theorem}
\newtheorem{proposition}{Proposition}
\newtheorem{lemma}{Lemma}
\theoremstyle{remark}
\newtheorem{remark}{Remark}
\def\bm{\boldsymbol}
\def\ba{\begin{array}}
\def\ea{\end{array}}
\def\la{\label}
\def\p{\partial}
\def\Hcal{{\mathcal H}}
\def\Lcal{{\mathcal L}}
\def\Acal{{\mathcal A}}
\def\Mcal{{\mathcal M}}
\def\Ccal{{\mathcal C}}
\def\ol{\overline}
\def\f{\frac}
\def\Z{{\mathbb Z}}
\def\CC{C}
\begin{document}

\title[Tau function and moduli of meromorphic forms]{Tau function and moduli \\ of meromorphic forms on algebraic curves}

\author{D.~Korotkin}

\address{Department of Mathematics and Statistics, Concordia University,
1455 de Maisonneuve West, Montreal, H3G 1M8  Quebec,  Canada}
\email{dmitry.korotkin@concordia.ca}

\author{P.~Zograf}
\address{
St. Petersburg Department of Steklov Mathematical Institute, Fontanka 27, St. Petersburg 191023 Russia,
and\newline
Chebyshev Laboratory, St. Petersburg State University, 14th Line V.O. 29, Saint Petersburg 199178 Russia}
\email{zograf@pdmi.ras.ru}

\thanks{The main result of this paper, Theorem 1, was obtained at St.~Petersburg Department of Steklov Mathematical Institute under support of RSF grant 21-11-00040. In addition to that, the work of DK was supported by NSERC grant RGPIN-2020-06816, and the work of PZ was supported by Ministry of Science and Higher Education of the Russian Federation, agreement № 075–15–2022–289.}

\begin{abstract}

We study the moduli space of meromorphic 1-forms on complex algebraic curves having at most simple poles with fixed nonzero residues. We interpret the  Bergman tau function on this moduli space as a section of a line bundle and study its asymptotic behavior near the boundary and the locus of forms with non-simple zeros. As an application, we decompose the projection of this locus to the moduli space of curves into a linear combination of standard generators of the rational Picard group with explicit coefficients that depend on the residues.

\end{abstract}
\maketitle



The present note is a continuation of a series of papers by the authors devoted to applications of tau functions to geometry of moduli spaces and can be considered as an extension of \cite{MRL}, where the case of holomorphic 1-forms (Abelian differentials of the first kind) was treated. The case of meromorphic 1-forms with simple poles (Abelian differentials of the third kind) basically follows the same lines, but involves some new technicalities like the dependence on relations between residues at poles and the appearance of tautological $\psi$-classes similar to \cite{BK}.

We begin with introducing the necessary notation.
Denote by  $\Hcal_{g,n}$  the space of equivalence classes of pairs $(C,v)$,
where $C$ is a smooth complex curve of genus $g$ with $n$ labeled marked points $p_1,\ldots, p_n$, and $v$ is a meromorphic 1-form on $C$ with at most simple poles at $p_i$ and no other poles; by Riemann-Roch, $\dim \Hcal_{g,n}=4g-4+2n$. The forgetful map  
$\Hcal_{g,n}\to\Mcal_{g,n}$ is a vector bundle (in the orbifold sense) with fiber $H^0(C,K_C(p_1+\ldots+p_n))$ over $(C,v)$
of dimension $g-1+n$, where $K_C$ is the canonical class of $C$. The bundle $\Hcal_{g,n}$ naturally extends to the Deligne-Mumford compactification $\ol{\Mcal}_{g,n}$ of $\Mcal_{g,n}$. More precisely, 
$\ol{\Hcal}_{g,n}=\pi_*(\omega_{g,n}(p_1+\ldots+p_n))$, where $\omega_{g,n}$ is the relative dualizing sheaf of the universal curve $\pi:\ol{\Ccal}_{g,n}\to\ol{\Mcal}_{g,n}$, and $p_1,\ldots, p_n$ are the structure sections of $\pi$.

A classical result that can be traced back to Riemann claims that for any $(C;p_1,\ldots, p_n)\in{\Mcal}_{g,n}$ and any
$(r_1,\ldots,r_n)\in\mathbb{C}^n$ there exists a meromorphic 1-form $v$ on $C$ having at most simple poles at the points 
$p_1,\ldots, p_n\in C$ with residues $r_1,\ldots,r_n\in\mathbb{C}$ and smooth elsewhere if and only if 
$r_1+\ldots+r_n=0$. Such a 1-form is defined uniquely up to an arbitrary holomorphic 1-form on $C$. 
This result naturally extends to the case of stable curves (where the 1-forms are allowed to have simple poles at the nodes,
see below) and has two important consequences:

(1) The {\em residue map} ${\rm res}: \ol{\Hcal}_{g,n}\longrightarrow\mathbb{C}^n,\;(C,v)\mapsto (r_1,\ldots,r_n)$, where $r_i={\rm res}\left|_{p_i}v\right.,\;i=1,\ldots,n$, fits into the exact sequence
\[
\ol{\Hcal}_{g,n}\stackrel{\rm res}{\longrightarrow}\mathbb{C}^n\stackrel{\sum r_i}{\longrightarrow}\mathbb{C}.
\]

(2) Any fiber $\ol{\Hcal}_{g,n}[{\bm r}]={\rm res}^{-1}(r_1,\ldots,r_n)$ of the residue map, where 
${\bm r}=(r_1,\ldots,r_n)$, is an affine bundle on $\ol{\Mcal}_{g,n}$ of dimension $g$. For smooth curves, the fibers of the projection ${\Hcal}_{g,n}[{\bm r}]\longrightarrow{\Mcal}_{g,n}$ are affine spaces over the vector spaces of holomorphic 1-forms on $C$, and a minor adjustment is needed for nodal curves.

By a result of Grothendieck \cite{Gr} (cf. also \cite{Fu}, p. 69), the projection
\[
{\rm pr}:\ol{\Hcal}_{g,n}[{\bm r}]\longrightarrow\ol{\Mcal}_{g,n}
\]
induces an isomorphism between the rational Picard groups ${\rm Pic}(\ol{\Hcal}_{g,n}[{\bm r}])\otimes\mathbb{Q}$ and
${\rm Pic}(\ol{\Mcal}_{g,n})\otimes\mathbb{Q}$.

Let us recall the standard set of generators of 
${\rm Pic}(\ol{\Mcal}_{g,n})\otimes\mathbb{Q}$ (cf. \cite{AC}). It consists of:
\begin{itemize}
\item the Hodge class $\lambda=c_1(\pi_*\omega_{g,n})$,
\item the classes $\psi_i=c_1(\Lcal_i)$ of the tautological line bundles $\Lcal_i\to\ol{\Mcal}_{g,n}\,, \linebreak i=1,\ldots,n$,
\item the class $\delta_{irr}$ of the boundary divisor parameterizing irreducible nodal curves, and
\item the classes $\delta_{j,I}$ of the boundary divisors parameterizing reducible curves with components of genera $j$ and $g-j$ containing $k$ and $n-k$ marked points $p_i$ respectively
(here $j=0,\ldots,[g/2]$, and the multi-index $I=\{i_1,\ldots,i_k\}$ runs over all subsets of $[n]=\{1,\ldots,n\}$;
the unstable cases $j=0,\;k=0,1$ are excluded).\footnote{Slightly abusing notation, we denote a divisor and its class 
in ${\rm Pic}(\ol{\Mcal}_{g,n})\otimes\mathbb{Q}$ by the same symbol.}
\end{itemize}

The moduli space ${\Hcal}_{g,n}$ admits a natural stratification according to the multiplicities of zeros and poles. Each stratum has a linear structure given by the
{\em period coordinates}, where  the transition maps are linear with integer coefficients. 
In the case of the principal stratum (i.e. the locus of 1-forms with only simple zeros)
these coordinates are the integrals of $v$ over a set of generators of the relative
homology group $H_1(C\setminus \{z_i\}_{i=1}^{2g-2+n}\cup\{p_j\}_{j=1}^n)$ 
that includes the periods of $v$ over a basis of canonical cycles on $C$, the integrals of $v$ between its zeros $z_i$, and the residues of $v$ at the poles $p_j$.

The principal stratum is open and dense in 
$\ol{\Hcal}_{g,n}$, and its complement is a divisor whose class we denote by 
$\delta_{deg}$ (the subscript $deg$ stands for {\em degenerate} as opposed to generic).
The moduli space $\ol{\Hcal}_{g,n}$ fibers into affine spaces parameterizing 1-forms with fixed residues ${\bm r}=(r_1,\ldots,r_n)$, where $r_1+\ldots+r_n=0$. By linearity, the intersection $\delta_{deg}\cdot\ol{\Hcal}_{g,n}[{\bm r}]$ has codimension 1 in
$\ol{\Hcal}_{g,n}[{\bm r}]$ and is locally described by vanishing of the integral of $v$ along a path connecting two coalescing zeros of $v$, cf. \cite{KonZor}, Section 5.2, for a relevant discussion. 

The main objective of this note is to decompose the pushforward class
\[
\delta_{deg}[{\bm r}]={\rm pr}_*\left(\delta_{deg}\cdot\ol{\Hcal}_{g,n}[{\bm r}]\right)
\]
into a linear combination of the standard generators of ${\rm Pic}(\ol{\Mcal}_{g,n})\otimes\mathbb{Q}$, see Theorem 1.
Rather unexpectedly, the result depends on partial sums of the residues being zero or not.

Now we remind  the basic facts about the tau function, cf. \cite{JDG}.
Choose a canonical basis $\{a_1,b_1,\ldots , a_g,b_g\}$  in $H_1(C,\Z)$. It defines a normalized basis $\{v_1,\dots,v_g\}$ in the space of holomorphic 1-forms on $C$ by the condition $\int_{a_i}v_j=\delta_{ij},\;i,j=1,\ldots,g$.  Denote by $\Omega$ the period matrix
$\Omega_{ij}=\int_{b_i}v_j$ of $C$.

Consider the following multi-valued $g(1-g)/2$-differential on $C$:
\begin{align}
c(x)=\f{1}{W(x)}\left(\sum_{i=1}^g v_i(x)\f{\p}{\p w_i}\right)^g\Theta(w,\Omega)\Big|_{w=K^x}
\label{defCc}
\end{align}
(see  \cite{Fay92}). Here $\Theta$ is the theta function on $C$, 
$K^x$ is the vector of Riemann constants with respect to the base point $x\in C$ defined by
$$
K^x_j=\frac{1+\Omega_{jj}}{2}-\sum_{i=1, \; i\neq j}^g \int_{y\in a_i} \left(v_i(y)\int_x^y v_j\right)\;
$$
 (see \cite{Fay92}, p.~6), and $W(x)$ is the Wronskian of holomorphic 1-forms defined as the usual Wronskian 
$$
W(x)=W(v_1,\dots,v_g)= W(f_1(\xi),\dots,f_g(\xi)) (d\xi)^{g(g+1)/2}
$$
of  $g$ functions $f_1(\xi),\ldots,f_g(\xi)$, where $\xi$ is a local coordinate near $x\in C$ and $v_i(\xi)=f_i(\xi)d\xi,
\;i=1,\ldots,g$.

The differential $v$ gives rise to a natural set of the so-called {\em distinguished} local parameters on $C$ (see \cite{JDG,CMP}) defined as follows:

\begin{itemize}
\item near any regular point $y$ (that is, $y\not\in(v)$, where $(v)$ is the divisor of $v$ on $C$), the distinguished local parameter is $\zeta(x)=\int_{x}^y v$.
\item near a simple zero $z_i$ of $v$ the distinguished local parameter is
\begin{align}
\xi_i(x)=\left(\int_{z_i}^x v\right)^{1/2},\quad i=1,\ldots, 2g-2+n\;
\label{distxiab}
\end{align}
(here we assume that all zeros of $v$ are simple and numbered), and
\item near a simple pole $p_j$ of $v$ the distinguished local parameter is
\begin{align}
\zeta_j(x)=\exp\left(\f{2\pi \sqrt{-1}}{r_j}\int_{z_1}^x v\right),\quad j=1,\dots, n\,,
\label{distziab}
\end{align}
where the contour of integration in (\ref{distziab}) connects the first zero $z_1$ with a neighborhood of the pole $p_j$.
\end{itemize}

The Schottky--Klein {\em prime form} 
$E(x,y)$ is a (normalized) holomorphic section of the line bundle with the divisor
$\{x=y\}$ on $C\times C$ (see \cite{Fay92}).  
It is explicitly given by the formula 
\begin{equation}
E(x,y)=\frac{\Theta[*](\Acal(x)-\Acal(y))}{\sqrt{h(x)}\sqrt{h(y)}}\,,
\label{defE}
\end{equation}
where $\Theta [*]$ the theta function  
with an arbitrary half-integer characteristic $[*]$, $\Acal(x)$ is the Abel map, and
$h(x)$ is a holomorphic 1-form on $\CC$ with only double zeros defined by
\[
h(x)=\sum_{j=1}^g \frac{\partial}{\partial w_j}\Theta[*](w)\Big|_{w=0} v_j(x)\,.
\]

The values of the prime forms  $E(x,z_i)$, $E(x,p_j)$,  $E(z_i,z_k)$,  $E(p_j,p_l)$ and  $E(z_i,p_j)$, where one of or both arguments coincide with the points of
divisor $(v)$, can be computed in terms of the distinguished local coordinates (\ref{distxiab}), (\ref{distziab}).
Namely, 
\begin{align}
&E(x,z_i)= E(x,y) \sqrt{d\xi_i(y)}|_{y=z_i}\;,\nonumber\\
&E(x,p_j)= E(x,y) \sqrt{d\zeta_j(y)}|_{y=p_j}\;,\nonumber\\
&E(z_i,z_j)=E(x,y) \sqrt{d\xi_i(x)d\xi_j(y)}|_{x=z_i,\;
y=z_j}\;,\nonumber\\
&E(p_i,p_j)=E(x,y) \sqrt{d\zeta_i(x)d\zeta_j(y)}|_{x=p_i,\;
y=p_j}\;,\nonumber\\
&E(p_i,z_j)=E(x,y) \sqrt{d\zeta_i(x)d\xi_j(y)}|_{x=p_i,\;
y=z_j}\;.
\end{align}

Since all zeros of $v$ are assumed to be simple, we can choose for this computation a fundamental domain of $C$ such
that $\Acal_x((v))+2K^x=0$, where $\Acal$ is the Abel map (see \cite{JDG}, Lemma 6).

Now we are in a position to introduce the following 
\begin{definition}
The tau function is given by the formula
\begin{align}
&\tau(C,v)=\nonumber\\
&c^{16}(x)\left(\frac{v(x)\prod_{i=1}^n E(x,p_i)}{\prod_{i=1}^{2g-2+n} E(x,z_i)} \right)^{8(g-1)}
\left(\f{\prod_{i<j}E(z_i,z_j)\prod_{i<j} E(p_i,p_j)}{\prod_{i=1}^n \prod_{j=1}^{ 2g-2+n}E(p_i,z_j) }\right)^{4}\,.
\label{tauab3}
\end{align}
\end{definition}
Here and below we adopt the convention that the product over $i<j$ means that it is taken over all pairs of indices $i$ and $j$ in their respective range (i.e. from $1$ to $2g-2+n $ in the case of $E(z_i,z_j)$ and from $1$ to $n$ in the case of $E(p_i,p_j)$.

\begin{lemma}
The tau function defined by (\ref{tauab3}) is independent of $x\in \CC$.
\end{lemma}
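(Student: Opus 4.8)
The plan is to show that $\tau(C,v)$, regarded as a function of $x$, is constant. The last factor in (\ref{tauab3}) (the one raised to the power $4$) involves only the $z_i$ and the $p_j$ and is manifestly independent of $x$, so it suffices to prove that
\[
F(x):=c^{16}(x)\left(\f{v(x)\prod_{i=1}^n E(x,p_i)}{\prod_{i=1}^{2g-2+n}E(x,z_i)}\right)^{8(g-1)}
\]
is independent of $x$. Writing $P(x)=v(x)\prod_{i=1}^n E(x,p_i)\big/\prod_{i=1}^{2g-2+n}E(x,z_i)$, we have $F=(c^2P^{\,g-1})^8$, so it is enough to treat $c^2(x)P^{\,g-1}(x)$. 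The strategy is the standard one: I would show that this expression is (i) a tensor of weight $0$ in $x$, i.e. an a priori multivalued \emph{function}; (ii) holomorphic and nowhere vanishing on $C$; and (iii) single-valued. A single-valued holomorphic function on the compact curve $C$ with no zeros and no poles is constant, which finishes the proof.

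For (i) I would add up the tensor weights in $x$. The form $v$ has weight $1$, while the prime form $E(x,\cdot)$ has weight $-\f12$ in $x$; hence $P$ has weight $1-\f n2+\f{2g-2+n}{2}=g$. Since $c$ has weight $\f{g(1-g)}{2}$, the product $c^2P^{\,g-1}$ has weight $g(1-g)+g(g-1)=0$, as claimed (equivalently, $F$ has weight $8g(1-g)+8g(g-1)=0$). For (ii), first consider $P$: its only possible zeros or poles are at the $z_i$ and the $p_j$. Near a zero $z_k$ of $v$ the simple zero of $v(x)$ is cancelled by the simple pole of $1/E(x,z_k)$, and near a pole $p_l$ of $v$ the simple pole of $v(x)$ is cancelled by the simple zero of $E(x,p_l)$; away from these points all the factors are regular and nonzero. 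Thus $P$ is a nowhere-vanishing (multivalued) $g$-differential. As for $c(x)$, by Fay's analysis of (\ref{defCc}) the numerator $\big(\sum_i v_i\p_{w_i}\big)^g\Theta(w)|_{w=K^x}$ vanishes precisely at the zeros of the Wronskian $W(x)$, so that $c$ is a holomorphic, nowhere-vanishing $\f{g(1-g)}{2}$-differential (see \cite{Fay92}). Hence $c^2P^{\,g-1}$ has neither zeros nor poles.

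The hard part is (iii), single-valuedness. Here I would transport $x$ around the cycles $a_k,b_k$ and collect the automorphy factors. Since $v$ is a genuine single-valued meromorphic form, the multivaluedness of $P$ comes only from the prime forms. Around $a_k$ the prime form is invariant, so $P$ is invariant; around $b_k$ each $E(x,y)$ acquires a factor $-\exp\!\big(-\pi\sqrt{-1}\,\Omega_{kk}-2\pi\sqrt{-1}\int_y^x v_k\big)$ (\cite{Fay92}). The $n$ factors in the numerator and the $2g-2+n$ factors in the denominator of $P$ produce a net sign $(-1)^{-(2g-2)}=1$, while the exponentials combine, using $\int_{y}^{x}v_k=\Acal_k(x)-\Acal_k(y)$, into the factor
\[
\exp\!\Big(\pi\sqrt{-1}(2g-2)\Omega_{kk}+2\pi\sqrt{-1}\big[(2g-2)\Acal_k(x)-\Acal_k((v))\big]\Big).
\]
At this point the relation $\Acal_x((v))+2K^x=0$ recorded just before the Definition is exactly what is needed: it rewrites the bracket as $(2g-2)\Acal_k(x)+2K^x_k$, a clean expression that matches the automorphy factor of $c(x)$ coming from the shift of the vector of Riemann constants $K^x$ in $\Theta$ and of the $v_i(x)$ in (\ref{defCc}). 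Feeding Fay's transformation law for $c$ into $c^2P^{\,g-1}$, the $b_k$-automorphy factors cancel, and the \emph{even} exponents $2$ and $g-1$ (hence the overall powers $16$ and $8(g-1)$ in (\ref{tauab3})) ensure that the square-root sign ambiguities inherited from the distinguished parameters drop out. The part of this computation not involving the poles $p_j$ is identical to the holomorphic case treated in \cite{MRL}; the only new point, and the one I would check most carefully, is that the extra prime forms $E(x,p_j)$ contribute monodromy that is reabsorbed by the very same relation $\Acal_x((v))+2K^x=0$ once the poles are counted in $(v)$. With single-valuedness established, (i)--(iii) together show that $c^2(x)P^{\,g-1}(x)$, and therefore $\tau(C,v)$, is constant in $x$.
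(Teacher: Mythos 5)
Your proposal is correct and follows essentially the same route as the paper's proof: check that the expression is a weight-zero tensor in $x$, that it is holomorphic and nowhere vanishing, and that the $b_j$-monodromy of the prime forms cancels against that of $c(x)$ via the relation $\Acal_x((v))+2K^x=0$ together with $\deg(v)=2g-2$. The only cosmetic difference is that you first strip off the manifestly $x$-independent factor, whereas the paper keeps everything together in the compact form (\ref{taugen}).
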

\begin{proof}
For convenience, we write the divisor of $v$ as
\begin{equation}
\label{defv}
(v)=\sum_{i=1}^{2g-2+n} z_i-\sum_{i=1}^n p_i= \sum_{j=1}^{N} k_j x_j\,,
\end{equation}
where $N=2g-2+2n$ and $k_i=\pm 1$ ($+1$ for $z_i$'s and $-1$ for $p_j$'s).
Then the expression (\ref{tauab3}) can be rewritten as follows:
\begin{equation}
\tau=c^{16}(x)\left(\frac{v(x)}{\prod_{i=1}^N E^{k_i}(x,x_i)}\right)^{8(g-1)} \prod_{i<j} E^{4 k_i k_j}(x_i, x_j)
\label{taugen}
\end{equation}
(recall that it holds when the fundamental polygon is chosen such that $\Acal_x((v))+2 K^x =0$). 
Note that \eqref{taugen} formally coincides with (3.24) and (3.25) of \cite{JDG} except that some of $k_j$'s can now be negative.
 Observe that the right-hand side of  (\ref{taugen}) is non-singular and non-vanishing on $\CC$: all zeros and poles of $v$ are cancelled by the corresponding poles or zeros of $E(x,x_j)$. Moreover, the tensor weight of $\tau$ with respect to $x$ is $0$
since
\begin{itemize} 
\item $c^{16}(x)$ is a  differential of weight $8g(1-g)$, 
\item $v(x)^{8(g-1)}$  is a differential of weight $8(g-1)$, 
\item the product of prime forms in the denominator of \eqref{taugen} is a differential of weight $(-1/2)(8(g-1)) \sum_{j=1}^N k_j = -8(g-1)^2$. 
\end{itemize} 
 It remains to verify that $\tau$, as a function of $x$, is single-valued on $\CC$. When $x$ goes around any of $a$-cycles $a_j$, both $c(x)$ and the prime forms $E(x,x_j)$ remain invariant, possibly up to a sign, thus
 $\tau$ does not change. To compute the change of $\tau$ around a cycle $b_j$ we recall that the prime form
 $E(x,y)$ transforms as follows \cite{Fay92}:
 $$E(b_j(x),y)=\pm  e^{-\pi \sqrt{-1} \Omega_{jj} -2\pi \sqrt{-1} (\Acal_j(x)-\Acal_j(y))} E(x,y)\,,$$
 so that
 $$E(b_j(x),x_k)=\pm  e^{-\pi \sqrt{-1} \Omega_{jj} +2\pi \sqrt{-1} (\Acal_x(x_k))_j} E(x,y)\,,$$
 while for the factor $c(x)$ we have
 $$c(b_j(x))=e^{-\pi \sqrt{-1} (g-1)^2 \Omega_{jj} - 2\pi \sqrt{-1} (g-1) K_j^x} c(x)$$
 (here $b_j(x)$ means that the argument $x$ goes along $b_j$ once).
 Thus, we have
 \begin{align*}
 \log\frac{\tau(b_j(x))}{\tau(x)}&\\
 &= 16(-\pi \sqrt{-1} (g-1)^2\Omega_{jj}-2\pi \sqrt{-1} (g-1)K_j^x)\\
&-8(g-1)(-\pi \sqrt{-1} \Omega_{jj} \sum_{i=1}^N k_i +2 \pi \sqrt{-1} \sum_{i=1}^N k_i (\Acal_x(x_i))_j)=0
 \end{align*}
 since $\sum_{i=1}^N k_i =2g-2$ and $\sum_{i=1}^N \Acal_x(x_i) =-2 K_\alpha^x$.
 Therefore, $\tau(x)$ is a holomorphic function on  $\CC$ and as such is $x$-independent. 
\end{proof}
  
Let us make a few  more comments about definition (\ref{tauab3})
\begin{itemize}

\item
The tau function on the space of holomorphic 1-forms \cite{JDG,MRL} was originally defined as a covariant constant section of the trivial line bundle on the corresponding moduli space with respect to a  certain flat connection. The equations for the tau function were then explicitly integrated that led to expression 
 (\ref{tauab3}) with $n=0$. In the case of the moduli space of meromorphic 1-forms, a derivation of defining equations for the tau function was just outlined in \cite{CMP}. In order to keep exposition as self-contained as possible, we decided not to use them in the present paper and to derive all the required properties of the tau function starting from the explicit formula 
 (\ref{tauab3}). This would also provide alternative proofs to the results of \cite{JDG,MRL}.

\item
Expression (\ref{tauab3}) is a scalar that seemingly depend on the choice of distinguished local coordinates near $z_i$ and $p_j$. Notice that the parameters $\xi_i$ in
(\ref{distziab}) are defined up to a sign, but since the powers of all prime forms in (\ref{tauab3})
are multiples of $4$, the formula for $\tau$ is invariant with respect to the choice of these signs.

\item
In turn, the parameters $\zeta_j$ given by (\ref{distziab}) depend on the choice of the 
first zero $z_1$ and the choice of integration paths connecting $z_1$ with the poles $p_j$.
This dependence propagates to the tau function (\ref{tauab3}), and
an easy computation shows that the total power of the differential $d \zeta_i$ in (\ref{tauab3}) is equal to $-2$. Namely,
it enters $4(g-1)$ times to the power of $E(x,p_i)$ and $2(n-1)$ times to the power of
$E(p_i,p_j)$ in the numerator of $\tau(C,v)$, and $-2(2g-2+n)$ times to the power of $E(p_i,z_j)$ in the denominator (taken with the minus sign), summing up to $-2$. Thus, the product
\begin{align}
\tau(C,v)\left(\prod_{i=1}^n d \zeta_i(p_i)\right)^{2}
\label{defTcal}
\end{align}
does not depend on the choice of local coordinate $\zeta_i$ near $p_i, \; i=1,\ldots,n,$ given by  (\ref{distziab}).
\end{itemize}

Yet another expression for the tau function uses the bidifferential
\begin{equation}
\sigma(x,y)=\left(\frac{c(x)}{c(y)}\right)^{1/(1-g)}\,,
\la{sigdef}
\end{equation}
where $\sigma(x,y)$ is a multivalued  differential of weight $-g/2$ in $x$ and weight $g/2$ in $y$, cf.  \cite{Fay92}.
We have
\begin{proposition}
Up to a multiplicative constant, the tau function (\ref{taugen}) is given by the formula
\begin{equation}
\label{tauKoko}
\tau(C,v)= c^{16}(x) \left(\prod_{j=1}^N  \sigma^{4 k_j} (x,x_j)\right) \left(\frac{v(x)}{  \prod_{j=1}^N E^{k_j}(x,x_j)}\right)^{4(g-1)}\,,
\end{equation}
where
$$
 \sigma(x,x_l)=\sigma(x,y)(d \xi_k(y))^{-g/2}\Big|_{y=x_l}\,,
 $$
and $\xi_l$ is a distinguished local coordinate near $x_l$.
\end{proposition}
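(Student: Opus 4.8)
The plan is to reduce the asserted equality of (\ref{taugen}) and (\ref{tauKoko}) to a single Fay-type representation of $v$ through prime forms, and then to dispose of the remaining factors by the argument already used in the proof of Lemma 1. First I would substitute (\ref{sigdef}) together with $\sigma(x,x_l)=\sigma(x,y)(d\xi_l(y))^{-g/2}|_{y=x_l}$ into the right-hand side of (\ref{tauKoko}). Writing $\sigma(x,x_j)=c(x)^{1/(1-g)}\bigl[c(x_j)^{-1/(1-g)}(d\xi_j)^{-g/2}\bigr]_{x_j}$, all $x$-dependence of the product $\prod_{j=1}^N\sigma^{4k_j}(x,x_j)$ sits in the factor $c(x)^{\frac{4}{1-g}\sum_j k_j}$; since $\sum_{j=1}^N k_j=\deg(v)=2g-2$ by (\ref{defv}), this equals $c(x)^{-8}$. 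Hence the right-hand side of (\ref{tauKoko}) reduces to $c^{8}(x)\bigl(v(x)/\prod_j E^{k_j}(x,x_j)\bigr)^{4(g-1)}$ times the $x$-independent quantity $\prod_j c(x_j)^{-4k_j/(1-g)}(d\xi_j)^{-2gk_j}$.

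The engine of the proof is the prime-form representation
\[
v(x)=\textrm{const}\cdot c(x)^{2/(1-g)}\prod_{j=1}^N E^{k_j}(x,x_j),
\]
equivalently the assertion that $\Phi(x):=v(x)\,c(x)^{-2/(1-g)}\prod_j E^{-k_j}(x,x_j)$ is independent of $x$. I would establish this by the same reasoning as in the proof of Lemma 1: $\Phi$ has tensor weight $0$, the weight $1$ of $v$ matching the combined weight $g-(g-1)=1$ of the denominator $c^{2/(1-g)}\prod_j E^{k_j}(x,x_j)$; it is holomorphic and nowhere vanishing on $C$, since the simple zeros and poles of $v$ are exactly cancelled by the poles and zeros of $\prod_j E^{-k_j}(x,x_j)$ while $c^{2/(1-g)}$ has neither; and it is single-valued, because the transformations of $c$ and of the prime forms along the $a$- and $b$-cycles recalled in the proof of Lemma 1 cancel in $\Phi$ by virtue of the same two identities $\sum_j k_j=2g-2$ and $\sum_j k_j\Acal_x(x_j)=-2K^x$. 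A holomorphic nowhere-vanishing function on the compact curve $C$ is constant, which proves the representation.

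Substituting $v(x)/\prod_j E^{k_j}(x,x_j)=\textrm{const}\cdot c(x)^{2/(1-g)}$ into both (\ref{taugen}) and the reduced form of (\ref{tauKoko}), every power of $c(x)$ cancels, reconfirming that each side is $x$-independent and leaving (\ref{taugen}) equal to a constant times $\prod_{i<j}E^{4k_ik_j}(x_i,x_j)$ and the reduced (\ref{tauKoko}) equal to a constant times $\prod_j c(x_j)^{-4k_j/(1-g)}(d\xi_j)^{-2gk_j}$. The proposition then comes down to matching these two $x$-independent prefactors, which is precisely the coincidence-limit identity obtained by letting $x\to x_l$ in the representation above and regularizing both sides with the distinguished parameters (\ref{distxiab})--(\ref{distziab}).

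I expect this last matching to be the main obstacle. Extracting the products $E(x_i,x_j)$ over the divisor from the diagonal values $c(x_j)$ requires careful bookkeeping of the $\sqrt{d\xi}$ and $\sqrt{d\zeta}$ normalizations used to evaluate prime forms at points of $(v)$, and of the branch of the root $(\cdot)^{1/(1-g)}$ entering $\sigma$. Since every sign and root of unity produced here is independent of $x$, all of them are absorbed into the single multiplicative constant of the statement; the substantive content, namely the elimination of the $x$-dependence, is already settled by the representation of $v$.
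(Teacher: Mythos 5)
Your strategy is essentially the paper's, reorganized. The ``prime-form representation'' $v(x)=\mathrm{const}\cdot c(x)^{2/(1-g)}\prod_jE^{k_j}(x,x_j)$ that you re-derive from scratch is nothing but a restatement of the $x$-independence of (\ref{taugen}) already established in Lemma 1 (raise it to the power $8(g-1)$ and compare with $c^{16}(x)$), so you could simply have cited that lemma; the paper uses the same fact in the equivalent form $\sigma(x,y)=\bigl(\tfrac{v(x)}{v(y)}\prod_iE^{k_i}(y,x_i)/E^{k_i}(x,x_i)\bigr)^{1/2}$ and then lets $y\to x_l$, whereas you keep $c$ explicit and defer the limit. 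Your weight count, the cancellation of the powers of $c(x)$, and the pairing of the off-diagonal prime forms into $\prod_{i<j}E^{4k_ik_j}(x_i,x_j)$ are all correct, and after your algebra both proofs land on the same residual identity,
\[
\prod_{i<j}E^{4k_ik_j}(x_i,x_j)=\mathrm{const}\cdot\prod_{l=1}^{N}c(x_l)^{-4k_l/(1-g)}\,,
\]
to be obtained by evaluating the representation of $v$ at the points of $(v)$ in the distinguished coordinates.

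The one genuine gap is that you stop exactly there and declare the coincidence limits to be signs and roots of unity absorbed into the constant. Independence of $x$ is not the issue; what must be checked is that the regularized diagonal factors are constant \emph{on the moduli space}. A priori the quantity $\bigl(E^{k_l}(y,x_l)\,(d\xi_l(y))^{1+k_l/2}/v(y)\bigr)\big|_{y=x_l}$ could be a nontrivial function of $(C,v)$; the actual content of the paper's proof is the computation showing that in the distinguished parameters (\ref{distxiab}), (\ref{distziab}) it equals $1/2$ at a simple zero and $2\pi\sqrt{-1}/r_j$ at the pole $p_j$ --- honest constants on each fiber $\ol{\Hcal}_{g,n}[{\bm r}]$, and incidentally the only place the residues enter. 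Since the Proposition is subsequently used to extract the boundary asymptotics of $\tau$, a ``constant'' that secretly varied over moduli would invalidate those computations, so this short calculation is the substantive step and cannot be waved away. (A smaller point: your $\Phi$ is only single-valued up to roots of unity because of the branch of $c^{2/(1-g)}$; the clean statement is that a suitable integer power of $\Phi$ is single-valued, holomorphic and nowhere zero, hence constant --- the same looseness the paper allows itself in Lemma 1.)
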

\begin{proof}
To begin with,  the expression (\ref{tauKoko}) was obtained in \cite{Koko}, formula (1.8), when all $k_i=1$, and in a preliminary version of \cite{JDG}, arXiv:math/0405042v1, formulas (4.4) and (4.5), when  all $k_i>0$. Here we give a proof for arbitrary $k_i$. 
 Using the independence of (\ref{taugen}) on $x$, we can express the ratio $c(x)/c(y)$ via the prime forms like follows:
 $$
 \sigma(x,y)=\left(\frac{v(x)}{v(y)}\prod_{i=1}^N   \frac{E^{k_i}(y, x_i)}{ E^{k_i}(x, x_i)} \right)^{1/2}\,.
 $$
From here we get
\begin{align}
\sigma(x,x_l)&= \left(\frac{v(x)}{\prod_{i=1}^N E^{k_i}(x,x_i)}\right)^{1/2} \prod_{i\neq l} E^{1/2}(x_l,x_i)\nonumber\\
&\times \left(\frac{E^{k_l}(y,x_l) (d\xi_l(y))^{1+k_l/2}}{v(y)}\Big|_{y=x_l}\right)^{1/2}
\label{sixxl}
\end{align}
 The last term in the product (\ref{sixxl}) is constant since $\xi_l$ is a distinguished local coordinate near 
 $x_l$.  
 Consider first the case $k_l=1$, i.e. $x_l=z_i$ is a simple zero of {v}. Then 
 $v(y)=2\xi_i(y)d\xi_i(y)$ and $E(y,x_i)\sim \xi_i(y) (d\xi_i(y))^{-1/2}$ so that
 $$
 \frac{E(y,z_i) (d\xi_i(y))^{3/2}}{v(y)}\Big|_{y=z_i}=\frac{1}{2}\,.
 $$
 Another case relevant for this paper is $k_l=-1$, i.e. $x_l=p_j$ is a simple pole of $v$. Then 
 $v(y)=\frac{r_j}{2\pi \sqrt{-1}}\frac{d\zeta_j(y)}{\zeta_j(y)}$,
 $E(z_j,y)\sim \zeta_j(y) (d\zeta_j(y))^{-1/2}$ 
 and
 $$
 \frac{ (d\zeta_j(y))^{1/2}}{v(y)E(y,\zeta_j)}\Big|_{y=\zeta_j}=\frac{2\pi \sqrt{-1}}{r_j}\;.
 $$
 Therefore,
 $$
 \prod_{j=1}^N \sigma^{4 k_j}(x,x_j)=const \left( \frac{v(x)}{\prod_{j=1}^N E^{k_j} (x,x_j)}\right)^{4g-4} \prod_{i<j} E^{4 k_i k_j}(x_i,x_j)\,,
 $$
 which, being substituted into (\ref{tauKoko}), gives (\ref{taugen}).
 \end{proof}

Now we are in a position to formulate 
\begin{proposition}
\label{seclL}
Fix $r_1,\ldots,r_n\in\mathbb{C}$ such that $\sum_{i=1}^nr_i=0$ and $r_i\neq 0,\;i=1,\ldots,n$.
Then the  tau function $\tau (C,v)$ is a  holomorphic section of the line  bundle 
$(\det\pi_*\omega_{g,n})^{\otimes 24}
\otimes\bigotimes_{i=1}^n \Lcal_i^{\otimes 2}$,
nowhere vanishing over the principal stratum $\ol{\Hcal}_{g,n}[{\bm r}]\cap\left({\Hcal}_{g,n}\setminus\delta_{deg}\right)$
in the moduli space $\ol{\Hcal}_{g,n}[{\bm r}]$.
\end{proposition}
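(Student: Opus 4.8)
The plan is to verify three things about the scalar $\tau(C,v)$ defined by (\ref{tauab3}): that its dependence on the auxiliary choices (a canonical basis of $H_1(C,\Z)$ and the distinguished parameters $\zeta_i$ at the poles) is exactly the automorphy needed to exhibit it as a section of $(\det\pi_*\omega_{g,n})^{\otimes24}\otimes\bigotimes_{i=1}^n\Lcal_i^{\otimes2}$, and that this section is holomorphic and nowhere zero over the principal stratum. Throughout I would use Lemma~1, so that $\tau$ may be treated as a function on $\ol{\Hcal}_{g,n}[{\bm r}]$ once the auxiliary choices are fixed, and I would work with the compact form (\ref{taugen}) of the defining formula.

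For the Hodge factor I would first note that in (\ref{taugen}) the prime forms $E(x,x_j)$, the differential $v$, and the distinguished parameters (which are built from periods and integrals of $v$) are all invariant under a change of the canonical basis of $H_1(C,\Z)$, so that the only factor transforming is $c^{16}(x)$. Writing a symplectic transformation with $g\times g$ blocks $A,B,C,D$, I would read the weight of $c(x)$ off (\ref{defCc}) factor by factor: the Wronskian $W(x)$ contributes $(\det(C\Omega+D))^{-1}$, hence $W^{-1}$ contributes $+1$; the theta function contributes $(\det(C\Omega+D))^{1/2}$ up to an exponential factor and a root of unity; and the directional derivative operator $\sum_i v_i\,\partial/\partial w_i$ is modular invariant, since the transformations of the $v_i$ and of the $\partial/\partial w_i$ cancel. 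Thus $c(x)\mapsto\kappa\,(\det(C\Omega+D))^{3/2}c(x)$ with $\kappa$ an eighth root of unity, so that $c^{16}(x)\mapsto(\det(C\Omega+D))^{24}c^{16}(x)$ because $\kappa^{16}=1$. This is precisely the transformation law of the coefficient of a section of $(\det\pi_*\omega_{g,n})^{\otimes24}$ in the frame $(v_1\wedge\dots\wedge v_g)^{\otimes24}$, which itself transforms by $(\det(C\Omega+D))^{-24}$. (Equivalently, this step can be imported from the holomorphic case $n=0$ of \cite{JDG,MRL}, since the modular behaviour of $\tau$ is carried entirely by $c^{16}(x)$, which is identical there.)

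For the tautological factors I would invoke the computation recorded just after (\ref{defTcal}): the total tensor weight of $\tau$ in $d\zeta_i$ is $-2$ at each pole $p_i$, so under $\zeta_i\mapsto c_i\zeta_i$ one has $\tau\mapsto c_i^{-2}\tau$, while $(d\zeta_i)^{2}\mapsto c_i^{2}(d\zeta_i)^{2}$. Since $d\zeta_i|_{p_i}$ is a frame of the cotangent line $\Lcal_i$, the number $\tau$ is exactly the coefficient, in that frame, of the well-defined element $\tau\prod_i(d\zeta_i(p_i))^{2}$ of $\bigotimes_i\Lcal_i^{\otimes2}$. Combining the two steps, the object $\tau\,(v_1\wedge\dots\wedge v_g)^{\otimes24}\otimes\prod_i(d\zeta_i(p_i))^{2}$ is independent of all auxiliary choices and defines the claimed section.

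Finally I would read holomorphy and non-vanishing directly off (\ref{taugen}). Each building block ($c(x)$, the prime forms, $v$, and the distinguished parameters) depends holomorphically on $(C,v)$, and over the principal stratum all zeros $z_i$ and poles $p_j$ are simple and mutually distinct; hence every off-diagonal prime form $E(x_i,x_j)$ is nonzero and the apparent zeros and poles of $v(x)$ are cancelled by the factors $E(x,x_j)$. Choosing, via Lemma~1, a value of $x$ at which $c(x)\neq0$, one sees that $\tau$ is a finite nonzero number at each point of the stratum, so the section is holomorphic and nowhere vanishing there. I expect the main obstacle to be the second step: pinning down the modular weight of $c(x)$ rigorously requires controlling the lower-order terms produced by applying $(\sum_i v_i\,\partial/\partial w_i)^{g}$ to the product of $\Theta$ with the exponential automorphy factor, together with the shift of the vector of Riemann constants $K^x$, so as to confirm that exactly the weight $3/2$ (and nothing more) survives at $w=K^x$. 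Reducing to the already-established case $n=0$ is the safest way to bypass this bookkeeping.
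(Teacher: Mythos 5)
Your treatment of the $\Lcal_i^{\otimes 2}$ factors (via the total weight $-2$ in each $d\zeta_i$, as computed before \eqref{defTcal}) and of holomorphy and non-vanishing on the principal stratum (cancellation of zeros and poles of $v$ against the prime forms, as in the proof of Lemma~1) is correct and matches the paper. However, the step establishing the $(\det\pi_*\omega_{g,n})^{\otimes 24}$ factor contains a genuine error: you assert that under a symplectic change of the canonical homology basis ``the prime forms $E(x,x_j)$ \dots are all invariant \dots so that the only factor transforming is $c^{16}(x)$.'' This is false. The prime form transforms with a nontrivial exponential automorphy factor, $\widehat{E}(x,y)=E(x,y)\,e^{\pi\sqrt{-1}\langle Q\Acal|_x^y,\,\Acal|_x^y\rangle}$ with $Q=(C\Omega+D)^{-1}C$, and likewise $c(x)$ picks up, beyond $\epsilon\,(\det M)^{3/2}$, the exponential factor $\exp(\pi\sqrt{-1}\langle K^x,QK^x\rangle-\pi\sqrt{-1}\langle\alpha_0,\widehat\Omega\alpha_0\rangle-2\pi\sqrt{-1}\langle\alpha_0,M^{-1}K^x\rangle)$. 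Moreover, the normalization $\Acal_x((v))+2K^x=0$ under which \eqref{taugen} is written is not preserved: after the transformation one has $\hat\Acal_x((v))+2\hat K^x-2\widehat\Omega\alpha_0-2\beta_0=0$, which forces an additional exponential correction \eqref{expfac} to the defining formula. The entire content of the paper's Appendix~\ref{symplectic} is the verification that these three families of exponential factors — from the prime forms (summing, via $\sum_i k_iu_i=-2K^x$, to $-16\pi\sqrt{-1}\langle QK^x,K^x\rangle$), from $c^{16}(x)$, and from the shift of the fundamental polygon — cancel identically, leaving exactly $(\det M)^{24}$. Your proposal skips this cancellation entirely by assuming it away.

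Your fallback of ``reducing to the already-established case $n=0$'' does not by itself close the gap either: for $n>0$ the formula contains new prime-form factors $E(x,p_i)$, $E(p_i,p_j)$, $E(p_i,z_j)$ with $k_i=-1$, so one must at minimum check that the cancellation of exponential factors still goes through with signed multiplicities; the paper does this uniformly in the $k_i$, using only $\sum_i k_i=2g-2$ and $\sum_i k_i\Acal_x(x_i)=-2K^x$, which is why the argument survives the passage from holomorphic to meromorphic differentials. You also slightly misplace the expected difficulty: the delicate point is not the lower-order terms in applying $(\sum_i v_i\partial/\partial w_i)^g$ to $\Theta$ (Fay's formula (1.23) already packages that), but the bookkeeping of the quadratic exponential factors just described.
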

A proof of this Proposition  based on transformation properties of the prime form and the differential $c(x)$ \cite{Fay92} is given in Appendix \ref{symplectic}.

Now let us compute the divisor of the tau function $\tau(C,v)$ on $\ol{\Hcal}_{g,n}[{\bm r}]$, or, equivalently, the
asymptotics of $\tau(C,v)$ near $\delta_{deg}\cdot\ol{\Hcal}_{g,n}[{\bm r}],\;{\rm pr}^*\delta _{irr}$,  
and  ${\rm pr}^*\delta_{j,I}$.    

\textbf{Asymptotics of $\tau$ near $\delta_{deg}\cdot\ol{\Hcal}_{g,n}[{\bm r}]$.} 
We begin with computing the asymptotics of the tau function  $\tau(C,v)$
near the divisor $\delta_{deg}$ in the total moduli space $\ol{\Hcal}_{g,n}$. Let $z_1,\;z_2$ be any two simple zeros of $v$
on $C$ that coalesce to a double zero. 

\begin{lemma}
The tau function has the following asymptotics near $\delta_{deg}$:
\begin{equation}
\label{deg}
\tau(C,v)={const}\cdot t + o(t)\quad {\textrm as}\; t\to 0
\end{equation}
with some $const\neq 0$, where $t$ is a transversal local coordinate on $\ol{\Hcal}_{g,n}$ near $\delta_{deg}$.
\end{lemma}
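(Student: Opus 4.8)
The plan is to extract the leading $\e$-asymptotics of the explicit product (\ref{taugen}) as the two simple zeros $z_1,z_2$ coalesce, and to match the resulting power against the transversal coordinate $t$. I would fix a local coordinate $\zeta$ on $\CC$ centered at the nascent double zero and write $v=(\zeta^2-\e^2)\,h(\zeta)\,d\zeta$ locally, with $h$ holomorphic and $h(0)\neq0$, so that $z_1,z_2$ sit at $\zeta=\mp\e$. Because the zeros of $v$ are unordered, the function cutting out $\delta_{deg}$ to first order is the discriminant $(z_1-z_2)^2\sim\e^2$, so the transversal coordinate is $t\sim\e^2$. It is worth emphasizing that the relative period $\int_{z_1}^{z_2}v\sim\e^3$ is odd under $z_1\leftrightarrow z_2$ and hence is not single-valued across $\delta_{deg}$; mistaking it for $t$ would spuriously yield $t^{2/3}$. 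All factors in (\ref{taugen}) not involving $z_1$ or $z_2$ --- in particular $c^{16}(x)$, $v(x)^{8(g-1)}$, and every prime form among the remaining divisor points --- vary smoothly and tend to finite nonzero limits, so they contribute only an overall nonzero constant.

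The heart of the computation is the degeneration of the distinguished parameters (\ref{distxiab}). Near $z_1$ the linear term of $\int_{z_1}^{x}v$ is suppressed by the nearby second zero, so $\xi_1(x)^2\sim-\e\,h(0)\,(\zeta+\e)^2$ and similarly $\xi_2(x)^2\sim\e\,h(0)\,(\zeta-\e)^2$, whence
\[
d\xi_1\big|_{z_1}\sim\e^{1/2}\,d\zeta,\qquad d\xi_2\big|_{z_2}\sim\e^{1/2}\,d\zeta .
\]
Inserting these into the evaluation formulas for the prime form at points of $(v)$ gives
\[
E(x,z_{1,2})\sim\e^{1/4},\quad E(z_1,z_2)\sim\e^{3/2},\quad E(z_{1,2},z_j)\sim\e^{1/4}\ (j\ge3),\quad E(z_{1,2},p_l)\sim\e^{1/4},
\]
where the power $\e^{3/2}$ for $E(z_1,z_2)$ combines the separation $z_1-z_2\sim\e$ with the two half-order factors $\sqrt{d\xi_{1,2}}\sim\e^{1/4}$.

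Collecting exponents in (\ref{taugen}): the two factors $E^{k_i}(x,x_i)$, $i\in\{1,2\}$, contribute $\e^{-4(g-1)}$ through the power $8(g-1)$; the pair $E^{4}(z_1,z_2)$ contributes $\e^{6}$; the $2(2g-4+n)$ factors $E^{4}(z_{1,2},z_j)$, $j\ge3$, together contribute $\e^{2(2g-4+n)}$; and the $2n$ factors $E^{-4}(z_{1,2},p_l)$ together contribute $\e^{-2n}$. The bookkeeping
\[
-4(g-1)+6+2(2g-4+n)-2n=2
\]
gives $\tau=const\cdot\e^{2}(1+o(1))$ with a nonzero constant assembled from finite prime-form values and powers of $h(0)$. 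Since $t\sim\e^2$, this is exactly (\ref{deg}).

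The main obstacle is the pair of intertwined subtleties above: establishing the half-power $d\xi_{1,2}\sim\e^{1/2}$ (the vanishing of the naive linear term is precisely what distinguishes a coalescing pair from an isolated simple zero) and correctly identifying $t\sim\e^2$ rather than the period $\sim\e^3$. Once these are in place the exponent count is routine, and nonvanishing of the constant is automatic since no factor degenerates except through the recorded powers of $\e$.
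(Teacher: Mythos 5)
Your proof is correct and follows essentially the same route as the paper: the local normal form $v=(\zeta^2-\e^2)h\,d\zeta$, the identification $t\sim\e^2$ (with the period $\int_{z_1}^{z_2}v\sim\e^3$ explicitly \emph{not} the transversal coordinate), the key asymptotics $d\xi_{1,2}\sim\e^{1/2}$, and a power count in \eqref{taugen} yielding $\tau\sim\e^2=t$. The only difference is bookkeeping --- you distribute the $\sqrt{d\xi_{1,2}}$ factors into each prime-form evaluation, while the paper aggregates the total power $-2$ of each $d\xi_i$ and isolates the order-$\e^4$ vanishing of the theta function in $E^4(z_1,z_2)$ --- and the two tallies agree.
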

\begin{proof}
Consider a domain $D$ on $C$ with local coordinate $s$ containing both $z_1$ and $z_2$;
we assume that 
$$
v=  s (s-\varepsilon) ds\;.
$$
Then $\delta_{deg}$ is locally defined by equation $\varepsilon=0$, and the local transversal coordinate $t$ near $\delta_{deg}$ can be chosen to be 
$t=\varepsilon^2$ (since $z_1$ and $z_2$ are interchangeable; notice that the period coordinate $\int_{z_1}^{z_2} v = -\varepsilon^3/6$).
The distinguished local coordinates near $z_1$ and $z_2$ are given by $\xi_1(s)= (\int_0^s v)^{1/2}$ and $\xi_2(s)= (\int_{\varepsilon}^s v)^{1/2}$ respectively, so that
\begin{equation}
\frac{d\xi_1(s)}{ds}\Big|_{s=0}=\left(-\frac{\varepsilon}{2}\right)^{1/2}\;,\qquad
 \frac{d\xi_2(s)}{ds}\Big|_{s=\varepsilon} =\left(\frac{\varepsilon}{2}\right)^{1/2}\;.
\label{diska}\end{equation}

The following terms in (\ref{tauab3}) do not have finite nonzero limit as $z_1\to z_2$.

First, this is the theta function in the numerator of the prime form
$E^4(z_1,z_2)$, which contributes the term $\varepsilon^4$ to the  asymptotics of $\tau$ as $\varepsilon\to 0$ (since the theta-function with 
non-singular odd characteristics has simple zero at the origin).

Second, these are derivatives $\frac{d \xi_1}{ds}(z_1)$ and $\frac{d \xi_2}{ds}(z_2)$, each of which  
enter with power $-2$ (that appears in the same way as the power of $d\zeta_i$ 
computed before (\ref{defTcal})).  Therefore, these derivatives contribute the power $\varepsilon^{-2}$ to the asymptotics of $\tau$ as $\varepsilon\to 0$, and in total we get $\tau\sim \varepsilon^2$ as $\varepsilon\to 0$ which implies (\ref{deg}) since $t=\varepsilon^2$.
\end{proof}

\textbf{Asymptotics of $\tau$ near ${\rm pr}^*\delta _{irr}$.} This asymptotics is also computed  using only the explicit formula for the tau function. As the curve $C$ degenerates to an irreducible curve  with a node, 
it acquires two new  
simple poles at the preimages of the node with opposite residues, while its genus drops by 1. 

\begin{lemma}
The tau function  (\ref{tauab3}) has the following asymptotics near ${\rm pr}^*\delta _{irr}$:
\begin{align}\label{irr}
\tau(C,v)=const\cdot t^2+o(t^2)\quad {\rm as}\; t\to 0
\end{align}
with some $const\neq 0$, where $t$ is a transversal local coordinate on $\ol{\Hcal}_{g,n}$ near ${\rm pr}^*\delta _{irr}$.
\end{lemma}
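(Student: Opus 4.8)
The plan is to study the pinching of a nonseparating cycle via the plumbing construction. Write the degenerating family as $C_t$, with a nonseparating cycle, say $a_g$, contracted at $t=0$; the normalization of the limiting nodal curve is a smooth curve $\widetilde{C}$ of genus $g-1$ carrying two points $q_+,q_-$, the preimages of the node, and the plumbing parameter $t$ serves as a transversal coordinate to ${\rm pr}^*\delta_{irr}$. As $t\to 0$ the form $v$ tends to a meromorphic form $\widehat{v}$ on $\widetilde{C}$ which, besides the original simple poles at $p_1,\dots,p_n$, has two new simple poles at $q_\pm$ with opposite residues $\pm\rho$, where $\rho=\frac{1}{2\pi\sqrt{-1}}\int_{a_g}v$; at a generic point of the divisor $\rho\neq 0$, and since the number $2g-2+n$ of zeros is preserved, these zeros stay away from the node, so that $\widehat{v}$ lies in the principal stratum of $\Hcal_{g-1,n+2}$. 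Accordingly, the strategy is to compare $\tau(C_t,v)$, in the form (\ref{taugen}), with the genus $g-1$ tau function $\tau(\widetilde{C},\widehat{v})$ on $\Hcal_{g-1,n+2}$, which by Proposition \ref{seclL} applied in genus $g-1$ is finite and nonzero there and furnishes the nonvanishing constant in (\ref{irr}).

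The engine of the computation is Fay's degeneration calculus for the objects entering (\ref{taugen}). I would record the standard asymptotics: $\Omega_{gg}=\frac{1}{2\pi\sqrt{-1}}\log t+O(1)$ while the remaining entries of the period matrix converge to the period data of $\widetilde{C}$, so that $e^{2\pi\sqrt{-1}\,\Omega_{gg}}\sim t$; the normalized holomorphic differentials satisfy $v_\alpha\to\widehat{v}_\alpha$ for $\alpha\le g-1$ and $v_g\to\frac{1}{2\pi\sqrt{-1}}\,\omega_{q_+-q_-}$, the normalized third-kind differential on $\widetilde{C}$ with residues $\pm 1$ at $q_\pm$; the prime forms between points of the bulk converge to the prime forms of $\widetilde{C}$; and the theta function obeys the genus-reduction expansion whose leading term is the genus $g-1$ theta function. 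Substituting $v_g$ into the Wronskian $W(x)$ and into the top directional derivative $\left(\sum_i v_i\frac{\p}{\p w_i}\right)^g\Theta$ of (\ref{defCc}) then expresses the factor $c(x)$ through its genus $g-1$ analogue. A point to watch is that the $g$-th Riemann constant $K^x_g\sim\frac12\Omega_{gg}$ diverges, so that both the $m_g=0$ and $m_g=-1$ terms of the theta expansion contribute at leading order while the others are suppressed by positive powers of $t$.

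Inserting all of this into (\ref{taugen}) and factoring out $\tau(\widetilde{C},\widehat{v})$ leaves an explicit power of $t$ times a finite nonzero constant. The contributions that must be balanced are the change of exponent $8(g-1)\mapsto 8(g-2)$; the prime forms $E(x,q_\pm)$, $E(q_\pm,z_i)$, $E(q_\pm,p_j)$ and $E(q_+,q_-)$ linking the bulk to the node, which occur in $\tau(\widetilde{C},\widehat{v})$ but not in the limit of $\tau(C_t,v)$; the genus drop in $c^{16}(x)$, equivalently the behaviour of the Hodge-bundle frame $v_1\wedge\dots\wedge v_g$ that trivializes $\lambda$ across $\delta_{irr}$; and the residue $\rho$ entering the distinguished coordinates at $q_\pm$. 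The main obstacle is exactly this power count: it needs the precise coefficients of Fay's degeneration formulas rather than mere orders of magnitude, and one must verify that the half-integer powers of $t$ generated by the theta expansion cancel in pairs, leaving an integer power, and that the trivialization of $24\lambda+2\sum_i\psi_i$ extending over $\delta_{irr}$ is correctly matched against the scalar (\ref{tauab3}). Showing that the lower-order terms drop out and that the surviving leading order is a nonzero multiple of $t^2\,\tau(\widetilde{C},\widehat{v})$ then gives (\ref{irr}).
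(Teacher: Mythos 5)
Your plan is a genuinely different route from the paper's, and as written it has a real gap: the decisive step is never carried out. The entire content of the lemma is the exact exponent $2$, and your argument stops at ``inserting all of this into (\ref{taugen}) \dots leaves an explicit power of $t$,'' explicitly deferring the power count to Fay's precise degeneration coefficients. Establishing $\tau\sim const\cdot t^{\alpha}$ for some undetermined $\alpha$ is not a proof of (\ref{irr}); the bookkeeping of the genus drop in $c^{16}(x)$, the theta expansion over $m_g=0,-1$, the prime forms linking the bulk to $q_\pm$, and the residue $\rho$ is exactly where all the work lies, and none of it is done. There is also a structural obstruction you flag but do not resolve: formula (\ref{tauab3}) is only valid for a fundamental polygon with $\Acal_x((v))+2K^x=0$, and since $K^x_g\sim\frac{1}{4\pi\sqrt{-1}}\log t$ diverges, this condition forces some integration path defining the Abel map of $(v)$ to cross the pinching cycle. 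Consequently your assertion that ``the prime forms between points of the bulk converge to the prime forms of $\widetilde C$'' cannot be applied wholesale to the factors of (\ref{taugen}) as normalized there; the divergence is hidden in the path choices, not only in the explicitly degenerating quantities.

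The paper avoids all of this with a short trick: it re-normalizes the fundamental domain so that $\Acal_x((v))+2K^x-\Omega e_g=0$. With this choice the required Abel-map value stays finite as $t\to 0$ (the divergences of $2K^x_g$ and $\Omega_{gg}$ cancel), so all paths can be drawn outside the plumbing zone and \emph{every} factor of the formula has a finite nonzero limit; the change of normalization introduces a single explicit prefactor $\exp(-4\pi\sqrt{-1}\,\Omega_{gg})$, and the asymptotics $\Omega_{gg}=\frac{1}{2\pi\sqrt{-1}}\log t+O(t)$ immediately yields the factor $t^2$. If you want to salvage your approach, you must either (a) perform the full Fay power count, verifying the cancellation of half-integer powers and matching against $\tau(\widetilde C,\widehat v)$ on $\Hcal_{g-1,n+2}$, or (b) adopt the paper's change of fundamental polygon, after which the computation collapses to reading off one exponential factor.
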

\begin{proof} The expression (\ref{tauab3}) for the tau function is written under such choice of the fundamental polygon  that $\Acal_x((v))+2K^x=0$; however, this is not the choice suitable for analyzing the asymptotics near ${\rm pr}^*\delta _{irr}$. Instead, we are going to choose the fundamental domain such that
\begin{equation}
\Acal_x((v))+2K^x-\Omega e_g=0
\label{newchoice}
\end{equation}
where $e_g={\rm ^t}\!(0,\dots,0,1)$. Under such a choice of the fundamental domain the formula  (\ref{tauab3}) needs to be multiplied by an extra exponential factor (see (3.24), (3.25) of \cite{JDG} with ${\bf p}=-e_g$ and ${\bf q}=0$):
\begin{align}
&\tau(C,v)=\exp(-4\pi \sqrt{-1} \Omega_{gg})\nonumber\\
&\times c^{16}(x)\left(\frac{v(x)\prod_{i=1}^n E(x,p_i)}{\prod_{i=1}^{2g-2+n} E(x,z_i)} \right)^{8(g-1)}
\left(\f{\prod_{i<j}E(z_i,z_j)\prod_{i<j} E(p_i,p_j)}{\prod_{i=1}^n \prod_{j=1}^{ 2g-2+n}E(p_i,z_j) }\right)^{4}\,.
 \label{newtau}
\end{align}

For the following facts we refer to p.13 of \cite{Fay92}.
In the limit $t\to 0$ all entries of the period matrix remain finite 
except the entry $\Omega_{gg}$ which behaves as 
\begin{equation}
\Omega_{gg}= \frac{1}{2\pi \sqrt{-1}}\log t +O(t)
\label{asOm}
\end{equation}
Assuming that in the limit $t\to 0$ the point $x$ stays far from the nodes $a$ and $b$ (i.e. outside of the {\em ``plumbing zone''}), all the components of the vector of Riemann constants $K^x$ remain finite except $K^x_g$ that has the  asymptotics 
\begin{equation}
K^x_{g}= \frac{1}{4\pi \sqrt{-1}}\log t +O(1)\;.
\label{asK}
\end{equation}
At the same time, $c(x)$ and the prime form $E(x,y)$ have finite limits assuming again that $x$ and $y$, as well as the path 
connecting them 
remain outside of the plumbing zone. 
The vector $K^x-\Omega e_g$ in (\ref{newchoice}) also remains finite as $t\to 0$. Therefore, 
all the paths connecting the point $x$ with the points of divisor $(v)$ can be drawn outside of the plumbing zone. Thus, all the factors in the expression (\ref{newtau}) remain finite as $t\to 0$, except the exponential factor which behaves like $t^2$ due to (\ref{asOm}).
\end{proof}

{\bf Asymptotics of $\tau$ near ${\rm pr}^*\delta_{j,I}$.} Two different  cases are covered by the following two lemmas.

\begin{lemma}
Let $C$ degenerate to a reducible curve with components $C_1$ of genus $j$ and $C_2$ of genus $g-j$ such that the poles $p_{i_1},\ldots, p_{i_k}$ belong to $\CC_1$ and the poles $p_{i_{k+1}},\ldots,p_{i_n}$ belong to $C_2$. Let also 
assume that
\[
\sum_{l=1}^k r_{i_l}=\sum_{l=k+1}^n r_{i_l}= 0 \;.
\]
Then in terms of any transverse local coordinate $t$ near ${\rm pr}^*\delta_{j,I}$ in $\ol{\Hcal}_{g,n}[{\bm r}]$ the asymptotics of $\tau$ is given by 
\begin{align}\label{red1}
\tau(C,v)=const\cdot t^3+o(t^3)\quad {\rm as}\;t\to 0
\end{align}
with some $const\neq 0$.
\end{lemma}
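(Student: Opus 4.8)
The plan is to follow the strategy of the two preceding lemmas: adapt the homology basis to the degeneration, substitute Fay's degeneration asymptotics into the explicit formula (\ref{taugen}), and collect powers of the plumbing parameter $t$. I would fix a symplectic basis $\{a_1,b_1,\dots,a_g,b_g\}$ subordinate to the splitting, with $a_1,b_1,\dots,a_j,b_j$ supported on $C_1$ and the remaining cycles on $C_2$, and realize the smoothing by the plumbing relation $z_1 z_2 = t$ at the node, where $z_1,z_2$ are local coordinates at the two preimages $q_1\in C_1$, $q_2\in C_2$. Unlike the irreducible case, no entry of the period matrix blows up here, so I would work directly with (\ref{taugen}) with no exponential correction factor. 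The decisive consequence of the hypothesis $\sum_{l=1}^k r_{i_l}=\sum_{l=k+1}^n r_{i_l}=0$ is that the limiting form has vanishing residue at the node, hence restricts to differentials $v_1,v_2$ that are \emph{holomorphic} (and generically nonvanishing) at $q_1,q_2$. Comparing degrees of divisors then shows that exactly two of the $2g-2+n$ simple zeros of $v$ leave the components and migrate into the neck; writing $v\sim (A - B t\, z_1^{-2})\,dz_1$ there, these zeros sit at $z_1\approx\pm\sqrt{Bt/A}$, i.e.\ at distance of order $\sqrt t$ from the node, approaching $q_1$ as seen from $C_1$ and $q_2$ as seen from $C_2$.

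Next I would assemble the asymptotics of all ingredients of (\ref{taugen}) with the base point $x$ fixed on $C_1$ away from the neck (legitimate by $x$-independence). From \cite{Fay92} the period matrix block-diagonalizes with $O(t)$ off-diagonal blocks and the theta function factorizes into $\Theta_1\Theta_2$; prime forms between points on the same component have finite limits, while for $u\in C_1$, $w\in C_2$ one has $E(u,w)\sim t^{-1/2}\,E_1(u,q_1)E_2(q_2,w)$, which I would confirm on the genus-zero plumbing model. The factor $c(x)$ requires a separate analysis: the $g-j$ normalized differentials attached to $C_2$ restrict to $C_1$ as $O(t)$ multiples of \emph{the same} second-kind differential at $q_1$, so the Wronskian $W(x)$ vanishes to order $\tfrac12(g-j)(g-j+1)$ (one must descend through successive poles in the Laurent expansion at $q_1$ to extract a nonzero determinant), whereas $D^g\Theta|_{w=K^x}$ vanishes only to order $g-j$; hence $c(x)\sim t^{(g-j)(1-(g-j))/2}$. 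Finally the two neck zeros $z_a,z_b$ have distinguished coordinates with $d\xi_a/d\zeta\sim t^{1/4}$ in the rescaled neck coordinate $\zeta=z_1/\sqrt t$, whence the prime-form \emph{values} entering (\ref{taugen}) behave as $E(z_a,z_b)\sim t^{1/4}$ and $E(z_a,\,\cdot\,)\sim t^{-1/8}$ against every other marked point.

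Substituting these into (\ref{taugen}) and summing exponents, the contributions of $c^{16}(x)$, of the $x$-prime forms $\prod_i E^{-8(g-1)k_i}(x,x_i)$, and of the pairwise product $\prod_{i<j}E^{4k_ik_j}(x_i,x_j)$ each carry powers of $t$ that depend on $j$. The essential point of the computation is that these $j$-dependent powers cancel identically, leaving the net exponent $3$. Since no factor produces a $\log t$ term and all leading coefficients are generically nonzero — they reassemble into tau-type quantities for $(C_1,v_1)$ and $(C_2,v_2)$ together with the explicit neck constants — this yields (\ref{red1}) with $const\neq 0$.

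I expect the main obstacle to be the asymptotics of $c(x)$, in particular the exact order of vanishing of the Wronskian caused by the leading-order collinearity of the degenerating differentials, together with the careful tracking of the half-integer powers coming from the neck zeros. The nontrivial consistency check is that these half-integer contributions must combine with the cross-component $t^{-1/2}$ factors to produce an \emph{integer} power with no logarithmic term — a reflection of the fact that $\tau$ is a genuine section of a line bundle and that the separating degeneration with balanced partial residues behaves, via the node, like a regular marked point rather than a pole.
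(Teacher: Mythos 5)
Your proposal is correct and follows the same overall strategy as the paper's proof: realize the separating degeneration by plumbing, observe that the balanced partial residues make the node a regular point so that exactly two simple zeros escape into the neck at distance $\sim t^{1/2}$, and then sum the orders in $t$ of the individual factors of the tau function. The one substantive difference is which of the two equivalent expressions you expand. The paper deliberately switches from (\ref{taugen}) to the $\sigma$-form (\ref{tauKoko}) precisely so that it can quote ready-made neck asymptotics $E(x,z_i)\sim t^{-1/8}$ and $\sigma(x,z_i)\sim t^{(3g-4j)/8}$ from \cite{Koko} together with Fay's $c(x)\sim t^{-(g-j)(g-j-1)/2}$, $E(x,y)\sim t^{-1/2}$, $\sigma(x,y)\sim t^{(g-2j)/2}$. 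You stay with (\ref{taugen}), which obliges you to supply the extra inputs $E(z_1,z_2)\sim t^{1/4}$ and $E(z_i,x_l)\sim t^{-1/8}$ for the two neck zeros against each other and against the remaining points of $(v)$; your local-model derivation of these (via $d\xi_i/dz\sim t^{-1/4}$ and $z_1-z_2\sim t^{1/2}$) is right, and your Wronskian heuristic for $c(x)$ reproduces Fay's exponent. I verified that with your exponents the total power of $t$ in (\ref{taugen}) with $x\in C_1$ is $-8(g-j)(g-j-1)+\bigl(8(g-1)(g-j-1)+2(g-1)\bigr)+\bigl(-8(j-1)(g-j-1)-2(g-2)+1\bigr)=3$, so the cancellation of the $j$-dependent terms that you assert does occur. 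That said, this summation \emph{is} the content of the lemma and must be written out rather than asserted: every contribution except the residual $2(g-1)-2(g-2)+1=3$ cancels, and an error of $\pm1/2$ in any of the half-integer neck exponents would change the answer. The trade-off is that your route is self-contained where the paper leans on \cite{Koko} and \cite{Fay92}, at the cost of a longer list of cross terms to track.
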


\begin{proof}
In this case two zeros of $v$, say, $z_1$ and $z_2$, 
coalesce and disappear at the node that becomes a regular point for the both of components $C_1$ and $C_2$.
Then $t=\left(\int_{z_1}^{z_2} v\right)^2$ can be taken as a  transversal local coordinate near ${\rm pr}^*\delta_{j,I}$.

More precisely, the degeneration picture is qualitatively described by the standard fixture $\{zw=t\big| |z|,|w|,|t|<1\}$ 
fibered over the disc $\{|t|<1\}$. The fiber over $t\neq 0$ is  the annulus $\{|t|<|z|<1\}$ (identified with $\{|t|<|w|<1\}$
by the map $w=t/z$), while the fiber over $t=0$ is the union of two discs $\{z=0\}$ and $\{w=0\}$ intersecting at the origin. Up to rescaling, 
the 1-form $v$ locally looks like $v\sim(1-t/z^2)dz=(1-t/w^2)dw$. The two coalescing zeros are, approximately, $z_{1,2}=\pm t^{1/2}$,
and $v(z)\to 1$ as $t\to 0$. 
This local behavior of $v$ is demonstrated by the model case of 
$\ol{\Hcal}_{0,4}$, where the differential $v$ is given by the formula 
(\ref{vdz}). Near the boundary point, say, $p=0$,  the plumbing parameter $t$ can be identified with $p$, and zeros of $v$ as functions of $t$ can be explicitly found  from 
(\ref{vdz})).

Now we will use the alternative expression for the tau function given by (\ref{tauKoko}).
Assuming that $x$ on $C_1$ and $y$ on $C_2$ stay  outside of the plumbing zone as $t\to 0$, we can analyze the asymptotics of all terms in (\ref{tauKoko}). Namely, we have
\begin{equation}
c(x)\sim t^{-(g-j)(g-j-1)/2}\;,\qquad 
E(x,y)\sim t^{-1/2}\;,\qquad
\sigma(x,y)\sim t^{(g-2j)/2}
\la{asCE}
\end{equation}
(see \cite{Fay92}, p. 14). If both $x$ and $y$ stay on $C_1$ or $C_2$ outside of the plumbing zone, then  $E(x,y)$ and $\sigma(x,y)$  remain finite as $t\to 0$. 

The asymptotics of $E(x,z_1)$, $E(x,z_2)$,  $\sigma(x,z_1)$ and  $\sigma(x,z_2)$ as $t\to 0$ are less trivial since both $z_1$ and $z_2$ tend to the nodal point. These asymptotics were computed in \cite{Koko}, Section~2.3. Namely,
$$
E(x,z_i)\sim t^{-1/8}\;,\quad i=1,2,
$$
see (2.35) in \cite{Koko},
and
$$
\sigma(x,z_i)\sim t^{(3g-4j)/8}\;,\quad i=1,2,
$$
where $j$ is the genus of $C_1$, see (2.40) in \cite{Koko}.

Now assume that the poles $p_i,\dots, p_k$ stay on $C_1$ and the poles $p_{k+1},\dots,p_n$ stay on $C_2$ as $t\to 0$;
the zeros $z_{1,2}$ tend to the node while the zeros $z_3,\dots,z_{2j+k}$ stay on $C_2$ and the remaining $2(g-j)-2+(n-k)$ zeros stay on $C_2$. We use the above asymptotic formulas to compute the total power of $t$ in the asymptotics of $\tau(C,v)$ 
as $t\to 0$:
\begin{itemize}
\item the term $c^{16}(x)$ gives the power $-8(g-j)(g-j-1)$ of $t$; 
\item the product of $\sigma^{4 k_i}(x,x_i)$ when $x_i$ remains on 
$C_2$ gives the power $4(g-2j)(g-j-1)$ of $t$;
\item the product $(\sigma(x,z_1)\sigma(x,z_2))^4$ gives the power $3(g-j)-j$ of $t$;
\item the product of prime forms $E^{4(1-g)k_j}(x_i,x)$ for $x_i$'s staying on $C_2$ gives the power 
$2(2(g-j)-2)(g+2j-1)$ of $t$;
\item the product $(E(x,z_1)E(x,z_2))^{4(1-g)}$ gives the power $g-1$ of $t$.
\end{itemize}
Summing up all of these powers we get $3$, as claimed in (\ref{red1}).
\end{proof} 

\begin{lemma}
Let $C$ degenerate to a reducible curve with components $C_1$ of genus $j$ and $C_2$ of genus $g-j$ such that the poles $p_{i_1},\ldots p_{i_k}$ belong to $C_1$ and the poles $p_{i_{k+1}},\ldots,p_{i_n}$ belong to $C_2$. Let also 
assume that
\[
\sum_{l=1}^k r_{i_l}=-\sum_{l=k+1}^n r_{i_l}\neq 0 \;.
\]
Then in terms of any transverse local coordinate $t$ near ${\rm pr}^*\delta_{j,I}$ in $\ol{\Hcal}_{g,n}[{\bm r}]$ the asymptotics of $\tau$ is given by 
\begin{align}\label{red2}
\tau(C,v)=const\cdot t^2+o(t^2)\quad {\rm as}\;t\to 0
\end{align}
with some $const\neq 0$.
\end{lemma}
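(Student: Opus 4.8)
The plan is to follow the strategy of the preceding lemma (the case of vanishing partial residues) almost verbatim, using the alternative expression (\ref{tauKoko}) for the tau function with the reference point $x$ placed on $C_1$ outside the plumbing zone, and reading off the power of $t$ from Fay's degeneration asymptotics (\ref{asCE}). The one place where the hypothesis $\sum_{l=1}^k r_{i_l}\neq 0$ enters is the behaviour of $v$ at the node. Since the residues on each side no longer cancel, the limiting form must acquire a genuine simple pole at the node on each of the two components, with opposite residues $\pm R$, $R=\sum_{l=1}^k r_{i_l}\neq 0$; in the plumbing coordinate $z$ (with $zw=t$) the form is, to leading order, a nonzero constant multiple of $dz/z$, which has no zeros in the annulus. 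Consequently no zeros of $v$ coalesce at the node: all $2g-2+n$ zeros stay at finite distance from it, distributing as $2j-1+k$ on $C_1$ and $2(g-j)-1+(n-k)$ on $C_2$ (one more on each component than in the vanishing-residue case, since the node-pole raises the degree of the zero divisor by one on each side). In particular the delicate asymptotics of $\sigma(x,z_i)$ and $E(x,z_i)$ for the coalescing zeros $z_1,z_2$, which produced two of the contributions in the previous proof, are now simply absent.

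First I would record the only quantity in the bookkeeping that changes, namely the balance of the divisor $(v)$ over $C_2$. Since the node-pole is not one of the tracked divisor points $x_j$ (these are the zeros and the marked poles of $v$ on the smooth curve), we have
\[
\sum_{x_i\in C_2} k_i=\bigl[2(g-j)-1+(n-k)\bigr]-(n-k)=2(g-j)-1\,,
\]
to be compared with the value $2(g-j-1)$ in the vanishing-residue case. Everything else is identical: the factors $c(x)$, $v(x)$, and all $\sigma(x,x_i)$, $E(x,x_i)$ with both arguments on the same component have finite nonzero limits, while for $x_i\in C_2$ one uses (\ref{asCE}) in the form $\sigma^{4k_i}(x,x_i)\sim t^{\,2k_i(g-2j)}$ and $E^{-4(g-1)k_i}(x,x_i)\sim t^{\,2(g-1)k_i}$. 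Since this is a separating degeneration the period matrix block-diagonalizes and stays finite, so no exponential prefactor of the kind needed near ${\rm pr}^*\delta_{irr}$ appears here.

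Then I would simply add up the exponents. The factor $c^{16}(x)$ contributes $-8(g-j)(g-j-1)$; the product of the $\sigma$-factors over $C_2$ contributes $2(g-2j)\sum_{x_i\in C_2}k_i$; and the product of the $E$-factors over $C_2$ (from the denominator raised to the power $4(g-1)$) contributes $2(g-1)\sum_{x_i\in C_2}k_i$. Using $\sum_{x_i\in C_2}k_i=2(g-j)-1$, the last two combine into $2\bigl(2(g-j)-1\bigr)\bigl[(g-2j)+(g-1)\bigr]=2\bigl(2(g-j)-1\bigr)^2$, so the total power of $t$ is
\[
-8(g-j)(g-j-1)+2\bigl(2(g-j)-1\bigr)^2=2\,,
\]
which is (\ref{red2}). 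As a consistency check one verifies that placing $x$ on $C_2$ instead (with $j,\,k$ replaced by $g-j,\,n-k$) yields the same exponent $2$, as it must since $\tau$ is $x$-independent.

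The main obstacle is not the arithmetic but pinning down the degeneration picture that feeds it: one must justify that the nonvanishing partial residue forces a simple pole of $v$ at the node on each branch, so that the zero divisor gains one point on each component and none of the zeros runs into the node, and one must check that the constant in (\ref{red2}) is genuinely nonzero. The latter follows exactly as in the preceding lemma: each factor entering (\ref{tauKoko}) has a finite nonzero leading coefficient on the stable curve, the large negative power coming from $c^{16}(x)$ being cancelled by the positive powers from the $\sigma$- and $E$-products without any further degeneracy, so that the order of vanishing is exactly $2$ and not higher. This is moreover consistent with $\tau$ being a nowhere-vanishing section over the principal stratum by Proposition \ref{seclL}.
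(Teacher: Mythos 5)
Your proof is correct and follows essentially the same route as the paper: you identify that the nonvanishing partial residue forces $v$ to develop simple poles with residues $\pm R$ at the two branches of the node (so no zeros coalesce there and the zero divisor splits as $2j-1+k$ and $2(g-j)-1+(n-k)$), and then sum the exponents of $t$ in \eqref{tauKoko} using the separating-degeneration asymptotics \eqref{asCE} together with $\sum_{x_i\in C_2}k_i=2(g-j)-1$. One remark: your exponent $-8(g-j)(g-j-1)$ for $c^{16}(x)$, which is the one consistent with \eqref{asCE} and with the preceding lemma, is exactly what makes the total come out to $2$; the intermediate powers displayed in the paper's own proof (e.g. $t^{-8((g-j)^2+g-j)}$ for $c^{16}(x)$) contain apparent typos, so your bookkeeping is the internally consistent version of the same argument.
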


\begin{proof}
Contracting a separating loop on $C$ we get a nodal curve and a meromorphic 1-form on it with simple poles at the preimages of the node $a\in C_1$ and $b\in C_2$ with residues $r_0$ and $-r_0$, respectively, where  $r_0=-\sum_{l=1}^k r_{i_l}\neq 0$. As before, we assume that the poles $p_{i_1},\ldots p_{i_{k}}$ belong to $C_1$ together with $2j-1+k$ zeros $z_i$. The remaining poles $p_{i_{k+1}},\ldots, p_{i_n} $ belong to $C_2$ together with $2(g-j)-1+n-k$ zeros $z_i$; all zeros and poles (except for $a,b$) stay away from the node.

Like in the previous lemma, the local degeneration picture is described by the standard fixture $\{zw=t\big| |z|,|w|,|t|<1\}$ 
fibered over the disc $\{|t|<1\}$. We remind that the fiber over $t\neq 0$ is  the annulus $\{|t|<|z|<1\}$ (identified with $\{|t|<|w|<1\}$
by the map $w=t/z$), while the fiber over $t=0$ is the union of two discs $\{z=0\}$ and $\{w=0\}$ intersecting at the origin.
Then the 1-form $v$ asymptotically behaves like $v\sim r_0\frac{dz}{z}=-r_0\frac{dw}{w}$ as $t\to 0$ and develops
simple poles with opposite residues at the points $a\in C_1$ and $b\in C_2$. Again,
this can be seen from the formula \eqref{vdz}.

Following \cite{Fay92}, p.~14, Case II, we compute the asymptotics of all terms in (\ref{tauKoko}) as $t\to 0$. 
Assuming that $x\in C_1$ and using the asymptotics (\ref{asCE}) we have
$$
c(x)^{16}\sim t^{-8((g-j)^2+g-j)}
$$
In the  product of $\sigma^{k_i}(x,x_i)$ only the terms with $x_i\in \CC_2$ contribute to non-trivial powers of $t$.
Thus
$$
\prod_{i=1}^N \sigma^{4k_i}(x,x_i)= \frac{\prod_{i=1}^{2(g-j)-1+n-k}\sigma^4(x,z_i)}{\prod_{i=1}^{n-k}\sigma^4(x,p_i)}\sim
t^{2(g-2j)(2(g-j)-1)}
$$
and, finally,
$$
\prod_{i=1}^N  E^{k_i} (x,x_i)=\frac{\prod_{i=1}^{2(g-j)-1+n-k} E(x,z_i)}{\prod_{i=1}^{n-k}E(x,p_i)}\sim t^{1/2(2(g-j)-1)}\,.
$$
Substituting these asymptotics into (\ref{tauKoko}) we get (\ref{red2}).
\end{proof}

Combining Proposition \ref{seclL} with asymptotic formulas  \eqref{deg}, \eqref{irr}, \eqref{red1} and \eqref{red2} we get 
\begin{theorem}\label{lapsi3}
Fix $r_1,\ldots,r_n\in\mathbb{C}$ such that $\sum_{i=1}^nr_i=0$ and $r_i\neq 0,\;i=1,\ldots,n$.
Then the following relation holds in the Picard group ${\rm Pic}(\ol{\Mcal}_{g,n})\otimes{\mathbb Q}$ 
\begin{align}\label{main}
\delta_{deg}[{\bm r}]=24\lambda+2\sum_{i=1}^n\psi_i-2\delta_{irr}-\frac{1}{2}\,\sum_{j=0}^g\,\sum_{I\subset [n]}\alpha_I\,\delta_{j,I}\,,
\end{align}
where $I=\{{i_1},\ldots, {i_k}\}$ runs over all subsets of $[n]=\{1,\ldots,n\}$, and
\begin{align}
\alpha_I=\begin{cases}2, \;{\rm if}\; \sum_{j=1}^k r_{i_j}\neq 0\,,\\
3,\;{\rm if}\; \sum_{j=1}^k r_{i_j}=0\,.\end{cases}
\end{align}
\end{theorem}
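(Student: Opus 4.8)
The plan is to identify $\tau$ with a holomorphic section of an explicit line bundle pulled back from $\ol{\Mcal}_{g,n}$, read off its divisor of zeros from Proposition~\ref{seclL} together with the four asymptotic lemmas, and then transport the resulting linear equivalence down to $\ol{\Mcal}_{g,n}$ using Grothendieck's isomorphism.

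Concretely, Proposition~\ref{seclL} says that $\tau$ is a holomorphic section of $\mathrm{pr}^*\!\big((\det\pi_*\omega_{g,n})^{\otimes 24}\otimes\bigotimes_{i=1}^n\Lcal_i^{\otimes 2}\big)$ over $\ol{\Hcal}_{g,n}[\bm{r}]$, nonvanishing on the principal stratum of smooth curves. Since the divisor of zeros of a nonzero holomorphic section represents the first Chern class, and $c_1(\det\pi_*\omega_{g,n})=\lambda$, $c_1(\Lcal_i)=\psi_i$, we obtain in $\mathrm{Pic}(\ol{\Hcal}_{g,n}[\bm{r}])\otimes\mathbb{Q}$ the identity
\[
\mathrm{div}(\tau)=\mathrm{pr}^*\Big(24\lambda+2\sum_{i=1}^n\psi_i\Big).
\]
The left-hand side is effective, and by Proposition~\ref{seclL} it is supported precisely on the codimension-one loci handled by the lemmas, namely $\delta_{deg}\cdot\ol{\Hcal}_{g,n}[\bm{r}]$, $\mathrm{pr}^*\delta_{irr}$, and the divisors $\mathrm{pr}^*\delta_{j,I}$; these exhaust the complement of the principal stratum in codimension one.

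It remains to turn the analytic vanishing orders into multiplicities. In each lemma the transverse parameter $t$ is, up to a nonzero constant, a reduced equation of the divisor it cuts out: for $\delta_{deg}$ the choice $t=\varepsilon^2$ reflects that the two coalescing zeros are unlabeled, while for the boundary divisors $t$ is the plumbing parameter, a multiplicity-one equation of $\mathrm{pr}^*\delta_{irr}$ and of each $\mathrm{pr}^*\delta_{j,I}$. Hence \eqref{deg}, \eqref{irr}, \eqref{red1}, \eqref{red2} give $\mathrm{ord}=1$ along $\delta_{deg}\cdot\ol{\Hcal}_{g,n}[\bm{r}]$, $\mathrm{ord}=2$ along $\mathrm{pr}^*\delta_{irr}$, and $\mathrm{ord}=\alpha_I$ along $\mathrm{pr}^*\delta_{j,I}$ (equal to $3$ when the partial residue sum vanishes and to $2$ otherwise). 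Substituting and solving for the class of $\delta_{deg}$ gives
\[
[\delta_{deg}\cdot\ol{\Hcal}_{g,n}[\bm{r}]]=\mathrm{pr}^*\Big(24\lambda+2\sum_{i=1}^n\psi_i-2\delta_{irr}-\sum_{(j,I)}\alpha_I\,\delta_{j,I}\Big),
\]
where the last sum runs once over each distinct separating divisor. Applying $\mathrm{pr}_*$, which by the cited theorem of Grothendieck is inverse to $\mathrm{pr}^*$, and recalling that $\mathrm{pr}_*[\delta_{deg}\cdot\ol{\Hcal}_{g,n}[\bm{r}]]=\delta_{deg}[\bm{r}]$ by definition, we descend this identity verbatim to $\mathrm{Pic}(\ol{\Mcal}_{g,n})\otimes\mathbb{Q}$. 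To recover the symmetric form \eqref{main}, I rewrite the single sum as one half of the double sum over all pairs $(j,I)$ with $0\le j\le g$ and $I\subseteq[n]$: each geometric divisor occurs twice, as $\delta_{j,I}$ and as $\delta_{g-j,I^c}$, and $\sum_{i=1}^n r_i=0$ forces $\alpha_{I^c}=\alpha_I$, so the two copies carry equal coefficients and the prefactor $\tfrac12$ exactly compensates the doubling.

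The only genuinely delicate point is the normalization in the middle step: one must verify that each $t$ is a multiplicity-one equation of the reduced divisor it defines. The danger is a spurious factor of two — for instance, mistaking the sign-ambiguous period $\int_{z_1}^{z_2}v$ for a coordinate when only its square is well-defined on the unlabeled-zero locus, or confusing the plumbing parameter with its square — which would feed directly into the boundary coefficients. Once these identifications are pinned down, the rest is the formal linear-algebra assembly above.
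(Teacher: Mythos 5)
Your proposal is correct and follows essentially the same route as the paper, which likewise obtains Theorem~\ref{lapsi3} by reading off the divisor of the section $\tau$ of $(\det\pi_*\omega_{g,n})^{\otimes 24}\otimes\bigotimes_{i=1}^n\Lcal_i^{\otimes 2}$ from Proposition~\ref{seclL} and the four asymptotic lemmas, and then pushing the resulting relation down to ${\rm Pic}(\ol{\Mcal}_{g,n})\otimes\mathbb{Q}$ via the Grothendieck isomorphism. Your additional remarks on the reducedness of the transversal coordinates (e.g. $t=\varepsilon^2$ versus $\varepsilon$ for the unlabeled coalescing zeros) and on the factor $\tfrac12$ compensating the double count $\delta_{j,I}=\delta_{g-j,I^c}$ with $\alpha_{I^c}=\alpha_I$ are exactly the bookkeeping the paper leaves implicit.
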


\begin{remark}
The condition $r_i\neq 0,\; i=1,\ldots, n,$ is crucial for our approach. When $r_i\to 0$, the tau function \eqref{tauab3} 
acquires an essential singularity that makes its asymptotic analysis complicated, if at all doable.
\end{remark}

Let us illustrate Theorem 1 on two simple examples. 

\smallskip
\emph{Example 1.} Let $g=0$ and $n=4$. Then $\ol{\Mcal}_{0,4}=\mathbb{C}P^1$, its boundary divisor is $\delta=\{0,1,\infty\}$, and in this case $\lambda=0,\;\delta_{irr}=0$. Moreover, combining Mumford's formula
\begin{align}\label{Mu}
\kappa_1=12\lambda + \sum_{i=1}^n \psi_i - \delta
\end{align}
(cf., e.g., \cite{ACG}, Chapter XIII, Eq.~(7.7)) with the formula
\[
\kappa_1=\frac{1}{2(n-1)}\sum_{k=2}^{n-2}\sum_{|I|=k}(k-1)(n-k-1)\,\delta_{0,I}
\]
from \cite{Z}, Section 5, we see that in our case
$\sum_{i=1}^4 \psi_i =\frac{4}{3}\, \delta$; here $\kappa_1$ is the first Mumford's class (also known as the Weil-Petersson class).

A general meromorphic 1-form is then given by the formula
\begin{equation}
v(z)dz =\left(\frac{r_1}{z-p}+\frac{r_2}{z}+\frac{r_3}{z-1}\right)dz\,,\quad z\in\mathbb{C}\,, 
\label{vdz}
\end{equation}
where $p,\,0,\,1$ and $\infty$ are the simple poles of $v$ with residues
$r_1,r_2,r_3$ and $r_4=-r_1-r_2-r_3$ respectively. There are three distinct cases to consider.

{\it Case 1}. This is generic situation, when $r_1+r_2\neq 0,\,r_1+r_3\neq 0,\,r_2+r_3\neq 0$. The divisor $\delta_{deg}$ intersects each space $\ol{\Hcal}_{0,4}[{\bm r}]$ at precisely two points with coordinates 
\[
p_{1,2}=-\frac{r_1r_3+r_2r_4\pm 2\sqrt{r_1r_2r_3r_4}}{(r_2+r_3)^2}
\]
corresponding to the 1-forms with a double zero,
so that in this case $\delta_{deg}[{\bm r}]=\frac{2}{3}\,\delta$. Since here 
$\sum_{I\subset [4]}\alpha_I\,\delta_{0,I}=6\cdot\frac{1}{3}\,\delta=2\,\delta$, this agrees with \eqref{main}.

{\it Case 2.} Here $r_1+r_2 \neq 0,\;r_1+r_3\neq 0,\;r_2+r_3 = 0$. The divisor $\delta_{deg}$ intersects each space 
$\ol{\Hcal}_{0,4}[{\bm r}]$ at precisely one point with coordinate 
\[
p=\frac{(r_1+r_2)^2}{4r_1r_2}\,, 
\]
corresponding to the 1-form with a double zero,
so that in this case $\delta_{deg}[{\bm r}]=\frac{1}{3}\,\delta$. Since here 
$\sum_{I\subset [4]}\alpha_I\,\delta_{0,I}=(4+3)\cdot\frac{1}{3}\,\delta=\frac{7}{3}\,\delta$, this also agrees with \eqref{main}.

{\it Case 3.} Here $r_1+r_2 = 0,\,r_1+r_3\neq 0,\,r_2+r_3 = 0$. The intersection of $\ol{\Hcal}_{0,4}[{\bm r}]$  with
$\delta_{deg}$ is trivial, so that in this case $\delta_{deg}[{\bm r}]=0$. Since here 
$\sum_{I\subset [4]}\alpha_I\,\delta_{0,I}=(2+6)\cdot\frac{1}{3}\,\delta=\frac{8}{3}\,\delta$, this again agrees with \eqref{main}.

\smallskip
\emph{Example 2.} Take $g=1,\;n=2,\;r_1=1,\;r_2=-1$,  and compute the pushforward class $\delta_{deg}[\pm 1]$ in the Picard group
${\rm Pic}(\overline{\mathcal{M}}_{1,2})\otimes\mathbb{Q}$ in two different ways.

On the one hand, formula \eqref{main} adapted for this case reads
\begin{align}\label{ex2}
\delta_{deg}[{\pm 1}]=24\lambda+2(\psi_1+\psi_2)-2\delta_{0}-3\delta_1,
\end{align}
where  $\delta_0=\frac{1}{2}[\overline{\mathcal{M}}_{0,4}]$ and $\delta_1=[\overline{\mathcal{M}}_{1,1}]$
are the classes of the boundary divisors of $\overline{\mathcal{M}}_{1,2}$.

On the other hand, we can modify to our case (we briefly sketch it below) an approach of D.~Zvonkine \cite{Zvo} originally developed for the case of holomorphic 1-forms (some details can be found in \cite{S}, Section 1.6, An example) which  leads to the same result.

Namely, denote by 
$\overline{\mathcal{H}}_{1,2}^{1}[\pm1]$ the moduli space of 
meromorphic 1-forms on elliptic curves with two simple poles of residues $\pm 1$ and one marked point, say, $x\in C,\; x\neq p_1, p_2$. 
Let $\mathcal{L}$ be the tautological line
bundle on $\overline{\mathcal{H}}_{1,2}^{1}[\pm1]$, where $\mathcal{L}\left|_{(C,v,x)}\right.=T^*_x C$.
It comes with a canonical section that assigns to the 1-form $v$ the cotangent vector $v(x)\in\mathcal{L}$. 
This section vanishes precisely when $v$ has a zero at $x$, and we denote the closure of its zero 
set by $\Delta_1$. Clearly, $[\Delta_1]=\psi$, where $\psi=c_1(\mathcal{L})$.

Let us turn our attention to the locus $\Delta_1$. The restriction of $\mathcal{L}^{\otimes 2}$ to $\Delta_1$ has a section that vanishes when a 1-form has a second order zero at the marked point $x$. Denote the corresponding sub-locus by
$\Delta_2$; clearly, $[\Delta_2]=\psi\cdot (2\psi)$. The locus $\Delta_2$ consists of components of three types:
\begin{enumerate}[label=(\arabic*)]
\item The closure $\Delta_2^{(1)}$ of the locus where the marked point is a zero of order exactly 2.
\item The locus $\Delta_2^{(2)}$, where the marked point  belongs to an elliptic tail attached to the rest of the curve by a single node.
\item The locus $\Delta_2^{(3)}$, where the marked point belongs to a rational curve connecting two otherwise disconnected components of the curve.
\end{enumerate}
Push the equality $[\Delta_2^{(1)}]+[\Delta_2^{(2)}]+[\Delta_2^{(3)}]=2\psi^2$ forward using the natural forgetful map
${\rm pr}:\overline{\mathcal{H}}_{1,2}^{1}[\pm1]\rightarrow\overline{\mathcal{M}}_{1,2}$.  We get
\[
{\rm pr}_*([\Delta_2^{(1)}]) =\delta_{deg}[\pm1],\;\; {\rm pr}_*([\Delta_2^{(2)}])=0,\;\;{\rm pr}_*([\Delta_2^{(3)}])=\delta_1,\;\;2\,{\rm pr}_*(\psi^2)=2\kappa_1
\]
that yields
\begin{align}
\delta_{deg}[\pm1]=2\kappa_1-\delta_1.
\end{align}
Substituting Mumford's formula \eqref{Mu} into the last equality we get formula \eqref{ex2}.

Moreover, Mumford's computation of the Chow ring of $\overline{\mathcal{M}}_{2}$ (cf. \cite{Mu}, Part III) implies, in particular, that
\[
\lambda=\frac{1}{12}\,\delta_0,\quad\psi_{1,2}=\frac{1}{12}\,\delta_0+\delta_1,\quad\kappa_1=\frac{1}{6}\,\delta_0+\delta_1,
\]
and, therefore, $\delta_{deg}[\pm 1]=\frac{1}{3}\,\delta_0+\delta_1$ in the Picard group 
${\rm Pic}(\overline{\mathcal{M}}_{1,2})\otimes\mathbb{Q}$.

\begin{remark}
After this paper was completed, A.~Sauvaget informed us that the methods of his paper \cite{S} would, in principle, allow one to express the divisor class $\delta_{deg}$ in terms of the standard generators of the rational Picard group of the moduli space 
$\overline{\mathcal{H}}_{g,n}$.
\end{remark}

\begin{remark}
The tau function on moduli spaces of holomorphic differentials (when $n=0$) gives the holomorphic factorization formula for the determinant of Laplacian on a Riemann surface with flat conical metric $|v|^2$ (see \cite{JDG}).
For the meromorphic case $n>0$ the volume of a Riemann surface in flat metric $|v|^2$ is infinite; this makes the 
spectral theory more complicated. Nevertheless, the determinant of Laplacian can be defined on the so-called Mandelstam diagrams (see \cite{GidWol}) that correspond to the space of pairs $(C,v)$, where all periods of the differential $v$ are imaginary. On this real slice of the moduli space the determinant of Laplacian was defined and computed in terms of the tau function (\ref{tauab3}) in \cite{KokoCMP}, Corollary 1.
\end{remark}

{\bf Acknowledgements.} We are grateful to the anonimous referee for carefully reading the manuscript and suggesting several
improvements. We also thank A.~Kokotov, A.~Zorich and D.~Zvonkine for useful discussions.

\appendix
\section{Transformation of tau function under a change of canonical basis of cycles}
\label{symplectic}

Here we prove that the expression (\ref{defTcal}) is a section of $24$th power of the Hodge line bundle, i.~e. it transforms properly under symplectic changes of canonical bases of cycles on $C$.
Let a canonical basis of cycles $\{a_j,b_j\}_{j=1}^g$ transform to a new basis
$\{\hat{a}_j,\hat{b}_j\}_{j=1}^g$ by a matrix $G\in Sp(2g,\Z)$:
\begin{equation}
\left(\begin{array}{c} \hat{ b} \\ \hat{a} \end{array}\right)=
G \left(\begin{array}{c}  b \\ a \end{array} \right)\;,
\label{symptrans}
\end{equation}
where
$$
G=\left(\begin{array}{cc}  A & B \\ C &D\end{array}\right)\;.
$$
Then the column vector of $v$ of (normalized) Abelian differentials $v_1,\dots,v_g$, the period matrix $\Omega$, and the prime form $E(x,y)$ transform as follows (\cite{Fay92}, pp.11-12):
\begin{align}
 &\hat{v}= {\rm ^t}\!(C\Omega+D)^{-1} v\;,
 \label{transv}\\
 &\widehat{\Omega}=(A\Omega+B)(C\Omega+D)^{-1}\;,\nonumber\\
&\widehat{E}(x,y)= E(x,y) e^{\pi\sqrt{-1} \langle Q \Acal|_x^y, \, \Acal|_x^y \rangle}\;,
\label{transE}
\end{align}
where 
$$Q=(C\Omega +D)^{-1}C\;.$$

Transformation formulas for the vector of Riemann constants $K^x$ and the multi-differential $c(x)$ also involve two vectors  $\alpha_0,\beta_0\in \frac{1}{2} \Z^g$ defined  by the requirement that 
$$
\alpha_0-\frac{1}{2} (C{\rm ^t}\! D)_0\in \Z^g\;,\qquad
\beta_0-\frac{1}{2} (A{\rm ^t}\! B)_0\in \Z^g\;,
$$
see Lemma 1.5 of \cite{Fay92};
here the subscript $0$ denotes the vector consisting of diagonal entries of a square matrix. The above conditions fix vectors $\alpha_0$ and $\beta_0$ up to a period of the Jacobian; this period if determined by the transformation law of the vector of the Riemann constants: 

\begin{equation}
\widehat{K}^x= M^{-1} K^x +\widehat{\Omega} \alpha_0 +\beta_0
\label{transK}\end{equation}
with 
$$M= {\rm ^t}\!(C\Omega+D)\;.$$

Finally, the transformation of the differential $c(x)$ looks as follows (see (1.23) of \cite{Fay92}):

\begin{align}
&\hat{c}(x)=\epsilon \, ({\rm det} M)^{3/2}\, c(x)\nonumber\\
&\times\exp(\pi\sqrt{-1}\langle K^x , \;Q K^x\rangle -\pi\sqrt{-1}\langle \alpha_0,\, \widehat{\Omega}\alpha_0\rangle -2\pi\sqrt{-1} \langle \alpha_0,  M^{-1} K^x\rangle)\;,
\label{transc}\end{align}
where $\epsilon^8=1$.

Now we will need the following
\begin{lemma}
Under symplectic transformation (\ref{symptrans}) of the canonical basis of cycles the tau function (\ref{tauab3}) transforms as follows:
\begin{equation}
\hat{\tau}= ({\rm det} M)^{24} \tau
\label{transtau}\end{equation}
\end{lemma}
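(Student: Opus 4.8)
The plan is to substitute the three transformation laws (\ref{transE}), (\ref{transc}) and (\ref{transK}) directly into the product formula (\ref{taugen}) for $\tau$, written as $\tau=c^{16}(x)\bigl(v(x)/\prod_i E^{k_i}(x,x_i)\bigr)^{8(g-1)}\prod_{i<j}E^{4k_ik_j}(x_i,x_j)$, and to collect the resulting factors. Since the differential $v(x)$ is intrinsic it is unaffected, so only $c$ and the prime forms transform. The whole power of $\det M$ originates from $c^{16}$, which by (\ref{transc}) contributes $\bigl(\epsilon(\det M)^{3/2}\bigr)^{16}=(\det M)^{24}$ because $\epsilon^{8}=1$; the prime forms carry no power of $\det M$. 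Thus (\ref{transtau}) will follow once I show that the accompanying exponential prefactor is trivial.

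First I would dispose of the terms quadratic in $K^x$. Writing $\langle Q\,\cdot,\cdot\rangle$ for the symmetric form attached to the symmetric matrix $Q=(C\Omega+D)^{-1}C$ and setting $u_i=\Acal_x(x_i)$, the transformation (\ref{transE}) shows that the prime-form product acquires the exponent $\pi\sqrt{-1}\bigl(-8(g-1)\sum_i k_i\langle Qu_i,u_i\rangle+4\sum_{i<j}k_ik_j\langle Q(u_j-u_i),u_j-u_i\rangle\bigr)$. Expanding the square and using $k_i^2=1$, the cross terms reorganize, with $S=\sum_i k_iu_i$, and the combined exponent collapses to $-4\pi\sqrt{-1}\langle QS,S\rangle$. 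The normalization $\Acal_x((v))+2K^x=0$ used throughout gives $S=-2K^x$, so this equals $-16\pi\sqrt{-1}\langle QK^x,K^x\rangle$, which by symmetry of $Q$ cancels the term $+16\pi\sqrt{-1}\langle K^x,QK^x\rangle$ produced by raising (\ref{transc}) to the sixteenth power.

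What remains is the half-period contribution $\exp\bigl(-16\pi\sqrt{-1}\langle\alpha_0,\widehat\Omega\alpha_0\rangle-32\pi\sqrt{-1}\langle\alpha_0,M^{-1}K^x\rangle\bigr)$ coming from $c^{16}$, and I must show it equals $1$. To do so I would feed in (\ref{transK}) in the form $M^{-1}K^x=\widehat K^x-\widehat\Omega\alpha_0-\beta_0$ and re-express everything through the \emph{new} Riemann vector $\widehat K^x$, which by the new normalization satisfies $\widehat\Acal_x((v))+2\widehat K^x=0$; concretely $M^{-1}K^x$ and $\widehat K^x$ differ by the lattice vector $\widehat\Omega(2\alpha_0)+2\beta_0$, and the quasi-periodicity of the prime forms and of $c(x)$ under this very lattice shift (the same automorphy factors recalled in the proof that $\tau$ is $x$-independent) has to be invoked to reconcile the old-polygon and new-polygon evaluations. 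After this substitution the transcendental pieces are arranged to match, and the leftover should reduce to $2\pi\sqrt{-1}$ times an integer assembled from $\langle 2\alpha_0,2\beta_0\rangle$ and pairings of $2\alpha_0$ with integer vectors, using that $2\alpha_0,2\beta_0\in\Z^g$ and that the exponents $16$ and $4$ in (\ref{taugen}) are precisely the multiples needed to turn the surviving $8$th-root-of-unity and half-period phases into $1$. I expect this final bookkeeping — tracking the lattice re-lift of the divisor forced by passing from $\Acal_x((v))+2K^x=0$ to $\widehat\Acal_x((v))+2\widehat K^x=0$, and verifying that its phase is an even integer multiple of $\pi\sqrt{-1}$ — to be the main obstacle, since it is exactly where the delicate interplay between the choice of fundamental polygon and Fay's canonical lift of $K^x$ enters.
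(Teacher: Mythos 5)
Your overall strategy coincides with the paper's, and the first half of your computation is correct and complete: the factor $(\det M)^{24}$ comes entirely from $\hat c^{16}=\epsilon^{16}(\det M)^{24}c^{16}\cdot(\cdots)$, and your reorganization of the prime-form exponents into $-4\pi\sqrt{-1}\langle QS,S\rangle$ with $S=\sum_i k_iu_i=-2K^x$, cancelling the term $+16\pi\sqrt{-1}\langle QK^x,K^x\rangle$ coming from \eqref{transc}, is exactly the paper's computation \eqref{prime1}--\eqref{prime12}.

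The gap is in the last step, which you yourself flag as ``the main obstacle'': you never actually dispose of the half-period factor $\exp(-16\pi\sqrt{-1}\langle \alpha_0,\widehat\Omega\alpha_0\rangle-32\pi\sqrt{-1}\langle \alpha_0, M^{-1}K^x\rangle)$ produced by $\hat c^{16}$, and your plan for doing so rests on a false premise. Substituting $M^{-1}K^x=\widehat K^x-\widehat\Omega\alpha_0-\beta_0$ turns this factor into $\exp(16\pi\sqrt{-1}\langle \alpha_0,\widehat\Omega\alpha_0\rangle-32\pi\sqrt{-1}\langle \alpha_0,\widehat K^x\rangle)$ (the $\langle\alpha_0,\beta_0\rangle$ term is indeed harmless); this depends on $\widehat\Omega$ and $\widehat K^x$ and is emphatically not $2\pi\sqrt{-1}$ times an integer, so it cannot be shown to ``equal $1$''. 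The point you miss --- and it is the one genuinely nontrivial idea in the paper's proof --- is that in the new basis the normalization is \emph{not} $\widehat{\Acal}_x((v))+2\widehat K^x=0$, contrary to what you assert; rather, by \eqref{transv} and \eqref{transK} one gets \eqref{newrel}, i.e.\ relation \eqref{rq} holds with ${\bf p}=-2\alpha_0$, ${\bf q}=-2\beta_0$. Consequently the product formula \eqref{taugen} for $\hat\tau$ must be supplemented by the exponential prefactor \eqref{expfac}, which equals $\exp(-16\pi\sqrt{-1}\langle\alpha_0,\widehat\Omega\alpha_0\rangle+32\pi\sqrt{-1}\langle\alpha_0,\widehat K^x\rangle)$ and cancels the half-period contribution identically, rather than the latter vanishing on its own. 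Your instinct to ``reconcile the old-polygon and new-polygon evaluations via quasi-periodicity'' points in the right direction, but as written the argument is not closed, and the specific expectation that the leftover phase is an even integer multiple of $\pi\sqrt{-1}$ is incorrect.
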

\begin{proof}
The proof is obtained by substituting the transformation formulas (\ref{transE}), (\ref{transK}) and (\ref{transc}) into the formula for the 
tau function (\ref{taugen});
recall that it holds when the fundamental polygon is chosen such that $\Acal_x((v))+2 K^x =0$. 

Under an arbitrary choice of the fundamental polygon this  relation  holds up to an integer combination of the Jacobian lattice vectors,
i.e. there exist two vectors ${\bf r}, {\bf q}\in \Z^g$ such that
\begin{equation}
\Acal_x((v))+2 K^x+\Omega {\bf p}+ {\bf q} =0
\label{rq}
\end{equation}
If the divisor $(v)$ satisfies (\ref{rq}), then the expression (\ref{taugen}) gets an extra exponential factor given by
\begin{equation}
\label{expfac}
\exp\{-4\pi\sqrt{-1}\langle {\bf r},\Omega {\bf p}\rangle - 16 \pi\sqrt{-1}\langle {\bf p}, K^x\rangle\}
\end{equation}
(see (3.24) and (3.25) of \cite{JDG}).

Let us now apply the symplectic transformation (\ref{symptrans}) to the right hand side of (\ref{taugen}) and denote the result by $\hat{\tau}$. 
To write down the formula for $\hat{\tau}$ in terms of $\hat{E}$ and $\hat{c}$ we observe that the Abel map $\hat{\Acal}_x$ of divisor $(v)$ can be computed using the relation $\Acal_x((v))+2 K^x=0$ combined with (\ref{transv}) and (\ref{transK}):
\begin{equation}
\label{newrel}
\hat{\Acal}_x((v))+2 \hat{K}^x-2\widehat{\Omega} \alpha_0 -2\beta_0=0\;,
\end{equation}
i.e. in (\ref{rq}) we get ${\bf p}=-2\alpha_0$, ${\bf q}=-2\beta_0$.

Therefore, according to (\ref{expfac}), 
\begin{align}
\hat{\tau}&=\hat{c}^{16}(x)\left(\frac{v(x)}{\prod_{i=1}^N \widehat{E}^{k_i}(x,x_i)}\right)^{8(g-1)} \prod_{i<j} \widehat{E}^{4 k_i k_j}(x_i, x_j)\nonumber\\
\label{tauh}
&\times \exp(-16\pi\sqrt{-1}\langle \alpha_0,\widehat{\Omega} \alpha_0\rangle + 32 \pi\sqrt{-1}\langle \alpha_0, \widehat{K}^x\rangle)\;.
\end{align}

Due to (\ref{transK}) the exponential factor in (\ref{tauh}) can be rewritten as
\begin{equation}
\label{exp1}
\exp(16\pi\sqrt{-1}\langle \alpha_0,\widehat{\Omega} \alpha_0\rangle + 32 \pi\sqrt{-1}\langle \alpha_0,  M^{-1} {K}^x\rangle)\;,
\end{equation}
where we used the equality $\exp(32\pi\sqrt{-1}\langle \alpha_0,\,\beta_0\rangle)=1$ that holds because $\alpha_0$ and $\beta_0$ are vectors with half-integer components.

Compute now $\log \frac{\hat{\tau}}{\tau}$ using (\ref{transE}), (\ref{transc}). Recall that $\sum_{i=1}^N k_i=2g-2$, and put $u_i=  \Acal_x (x_i)$. Then the contribution of the product of exponents in the denominator of $\log \frac{\hat{\tau}}{\tau}$
equals to
\begin{equation}
8\pi\sqrt{-1}(1-g) \sum_{i=1}^N  k_i \langle Qu_i, u_i\rangle\;,
\label{prime1}\end{equation}
while the contribution of the prime forms in the numerator of $\hat{\tau}$ and $\tau$ is
\begin{equation}
4\pi\sqrt{-1}\sum_{i<j} k_i k_j  \langle Q(u_i-u_j),\, u_i-u_j\rangle\;.
\label{prime2}
\end{equation}
The sum of two sums (\ref{prime1}) and  (\ref{prime2}) equals to
\begin{equation}
-4\pi\sqrt{-1}\langle Q \sum_{i=1}^N  k_i u_i,\,  \sum_{j=1}^N  k_j  u_j\rangle=-16 \pi\sqrt{-1}\langle Q K^x,  K^x\rangle
\label{prime12}
\end{equation}
 (we use that $\sum_{i=1}^N k_i = 2g-2$ here).

Moreover, due to (\ref{transc}), 
\begin{align}
16\log \frac{\hat{c}(x)}{c(x)}&
=24 \log  {\rm det} (C\Omega+D) +16 \pi\sqrt{-1}\langle Q K^x,  K^x\rangle \nonumber\\
&- 16\pi\sqrt{-1}\langle \alpha_0,\widehat{\Omega} \alpha_0\rangle-32 \pi\sqrt{-1}\langle \alpha_0,  M^{-1} {K}^x\rangle\;.
\label{cchat}
\end{align}

Notice that the exponential factor \eqref{exp1} in the expression for $\log (\hat{\tau}/\tau)$ is canceled by the last two terms in (\ref{cchat}),
while the second term in the right hand side of (\ref{cchat}) cancels against (\ref{prime12}). Finally, we arrive at  (\ref{transtau}).
\end{proof}

\end{document}